\begin{document} 
\title{\Large Trace and flux a priori error estimates in finite element approximations of Signorni-type problems}%

\author{
{\sc
Olaf Steinbach\thanks{Email: o.steinbach@tugraz.at}
 }\\[2pt]
 Institut f\"ur Numerische Mathematik, TU Graz, Steyrergasse 30,\\ 8010 Graz, Austria\\[6pt]
{\sc and}\\[6pt]
{\sc Barbara Wohlmuth\thanks{Email: wohlmuth@ma.tum.de}  and
Linus Wunderlich\thanks{Corresponding author. Email: linus.wunderlich@ma.tum.de}
}\\[2pt]  M2 -- Zentrum Mathematik, Technische Universit\"at M\"unchen,\\ Boltzmannstra\ss{}e 3, 85748 Garching, Germany
}

\maketitle

\begin{abstract}\noindent
Variational inequalities play in many applications an important role and are an active research area. Optimal a priori error estimates in the natural energy norm do exist but only very few results in other norms exist. Here we consider as prototype a simple Signorini problem and provide new optimal order a priori error estimates for the trace and the flux on the Signorini boundary. The a priori analysis is based on the exact and a mesh-dependent Steklov--Poincar\'e operator as well as on duality in Aubin--Nitsche type arguments. Numerical results illustrate the convergence rates of the finite element approach.
\end{abstract}
\emph{Keywords:}
anisotropic norms;
Lagrange multiplier;
Schur complement;
Signorini boundary conditions;
Steklov--Poincar\'{e} operator.


\section{ Introduction}
Signorini-type problems are nonlinear boundary value problems that can be regarded as a simplified scalar model of elastic contact problems which are of interest in many engineering applications, see~\cite{laursen:02,wriggers:02}.
Signorini and contact problems share a similar formulation and their
approximation remains a challenging task due to the nonlinear boundary
condition.
A priori error estimates in the $H^1(\Omega)$ norm for such problems were investigated over many years, see~\cite{scarpini:77,  belgacem:03} for Signorini and~\cite{belgacem:99, sassi:99, hild:00} for contact problems.
Optimal a priori error estimates for two body contact problems in the $H^1(\Omega)$ norm
were established in~\cite{wohlmuth:05} and more recently reconsidered in \cite{hild:12, hild:1X}. However the optimal order a priori analysis for different norms of interest is still missing.

In this work, we restrict ourselves to the Poisson equation with unilateral Signorini
boundary conditions and provide  optimal order convergence rates in  norms associated with
the Signorini boundary $\Gamma_S$.
More precisely, we consider a priori error estimates for the trace in the
$H_{00}^{1/2}(\Gamma_S)$ norm and for the Lagrange multiplier, i.e., the flux, in the
$H^{-1/2}(\Gamma_S)$ norm. 
As a corollary we show improved a priori estimates in the $L^2$ norm for the primal variable on $\Omega$ and for the dual variable on $\Gamma_S$. 
While convergence rates for traces can often be established using estimates in the domain, these rates are typically not optimal.
To the best of our knowledge, no optimal order error estimates  in different norms than $H^1(\Omega)$ have been so far proven.
The order of the finite element approximation in the $L^2(\Omega)$ norm is firstly addressed in the early paper by~\cite{natterer:76}. However, the theoretical results are limited to very special situations. A generalization can be found in~\cite{hild:01, suttmeier:08}, but for a straightforward application to Signorini problems, the required dual regularity is lacking, so we do not follow these ideas.
Recently introduced techniques allow optimal estimates on interfaces and
boundaries for linear problems  under moderately stronger regularity assumptions, see~\cite{apel:12b, wohlmuth:12, human:13, waluga:13, apel:14, larson:14}.
These techniques can also be used to compensate a lack of regularity in the dual problem, see~\cite{horger:14}. 
 A reformulation of the primal variational inequality on the boundary, as
applied in~\cite{spann:93, steinbach:99, steinbach:12}, and a Strang lemma for
variational inequalities allow us to  use these techniques for the
nonlinear Signorini problem. 

This article is structured as follows:
In the next section, we state the Signorini-type problem and its discretization
as a primal formulation. In Section 3, two reformulations which play an important part in the analysis are briefly recalled; namely a saddle point problem and  a variational formulation of the Schur complement. Since the Galerkin formulation of the continuous Schur complement differs from the discrete Schur complement, a Strang lemma is applied in Section 4, and the error is related to the difference  of a Steklov--Poincar\'e operator and a finite element approximation.   
In Section 5, a rate for the primal  error in the $H^{1/2}_{00}(\Gamma_S)$ norm  is proven based on  anisotropic norms and dual problems with local data. As a corollary improved rates for the $L^2(\Omega)$ norm are shown.
The results are extended in Section 6, where optimal rates for the Lagrange multiplier in the natural $H^{-1/2}(\Gamma_S)$ 
and also in the stronger $L^2(\Gamma_S)$ norm are derived. Finally in Section 7, numerical results are
presented which confirm the new theoretical a priori bounds and illustrate some
additional aspects.

\section{ Problem setting and main result}
We consider the Poisson equation with Signorini-type boundary conditions. The partial differential equation is defined in a domain $\Omega \subset \mathbb R^d$, $d = 2,3$. We assume $\Omega$ to be polyhedral, convex, and bounded. The boundary $\Gamma \coloneq \partial \Omega$ is divided into two disjoint open parts $\Gamma = \overline \Gamma_D \cup \overline \Gamma_S$, such that $\Gamma_D$ has a positive Lebesgue measure. For simplicity of notation, we assume $\Gamma_S$ to be one facet of the boundary $\Gamma$. For $f\in L^2(\Omega)$, $g\in H^{1/2}(\Gamma_S)$,
 we consider Equations~\eqref{eq:strong_formulation01} to~\eqref{eq:strong_formulation03}:
\begin{subequations}\label{eq:strong_formulation}
\begin{align}
	-\Delta u &= f \quad \text{ in } \Omega, \label{eq:strong_formulation01} \\
	u &= 0 \quad \text{ on } \Gamma_D, \\
	\partial_n u \leq 0, \quad u \leq g, & \quad (u-g) ~ \partial_n u = 0 \quad \text{ on } \Gamma_S \label{eq:strong_formulation03}.
\end{align}
\end{subequations}
The problem can be regarded as a simplified contact problem where the constraints on $\Gamma_S$ play the role of a nonpenetration condition. 

 The actual contact set $\Gammaact \coloneq \{x \in \Gamma_S\colon u(x) = g(x) \}$ is assumed to be a compact subset of $\Gamma_S$. 
 With regards to the Dirichlet condition, we note that $g$ has to be positive in a neighbourhood of $\partial \Gamma_S$.

\begin{remark}\label{remark_regularity}
	In general, weak solutions of  Dirichlet--Neumann problems with smooth data can be represented as a series of singular components and a smooth part. The first singular component has typically a regularity of $H^{3/2-\varepsilon}(\Omega)$. However due to the sign-condition of the Signorini boundary, the regularity is improved. As long as no jump of the outer unit normal is present at the boundary of $\Gammaact \subset \Gamma_S$,  the stress intensity factor associated with the first singular component has to be zero. We are interested in the effects of the approximation caused by the Signorini boundary condition, so let us assume, that these singular parts do neither appear at any other part of the boundary. 
	Hence we assume $f$ to be sufficiently smooth and the solution to be $H^{5/2-\varepsilon}(\Omega)$ regular, see~\cite{moussaoui:92}.
\end{remark}

\subsection{Weak formulations}
The nonlinear Signorini boundary condition yields a constrained minimization problem as the weak formulation, e.g.,~\cite{glowinski:84, kinderlehrer:00}.
Let $ V  \coloneq \{v\in H^1(\Omega)\colon \left. v \right|_{\Gamma_D} = 0\}$ and denote the trace space of $V$ restricted to $\Gamma_S$ as $W \coloneq H_{00}^{1/2}(\Gamma_S)$.
For simplicity of notation, we omit the trace operator whenever there is no ambiguity.
 We define the convex set of admissible functions by $K \coloneq \{v\in V\colon \left. v \right|_{\Gamma_S} \leq g\}$, the bilinear form $a(u,v) \coloneq \int_\Omega \nabla u^{\rm T} \nabla v~\mathrm{d}x$ and the linear form $f(v) \coloneq \int_\Omega f v~\mathrm{d}x$.

The weak solution $u \in K$ then satisfies the variational inequality
	\begin{align}\label{vi_cont}
		a(u, v-u) \geq f( v -u), \quad v \in K.
	\end{align}

For the discretization, we assume a family of shape-regular simplicial triangulations $\mathcal{T}_h$. We denote by $N_{V_h}$ the number of vertices of the triangulation except the ones on $\overline{\Gamma}_D$ and by $N_{M_h}$ the number of vertices on $\Gamma_S$.  Note that, since the Signorini boundary is a facet of the polyhedral domain, both boundary parts are exactly represented by the triangulation. 
We define the discrete primal space using first order conforming finite elements $V_h \coloneq \{v_h\in C(\overline{\Omega})\colon \left. v_h \right|_{T} \in \mathbb{P}_1(T),  T\in \mathcal{T}_h, \left. \Tr v_h \right|_{\Gamma_D} = 0  \}$, spanned by the nodal Lagrange basis $\varphi_i,$ $i =1 , \ldots, N_{V_h}$ and denote the discrete trace space restricted to $\Gamma_S$ by $W_h$. Let $g_h \in W_h$ denote a suitable approximation of $g$.	
	The discretization of~\eqref{vi_cont} then reads: Find  $u \in  K_h \coloneq \{  v_h\in  V_h\colon \left. v_h \right|_{\Gamma_S} \leq g_h \}$,  such that
	\begin{align}\label{vi_discrete}
		a(u_h, v_h-u_h) \geq f( v_h -u_h), \quad v_h \in K_h.
	\end{align}
For simplicity let us assume that $g$ is affine and $g_h = g$. In view of the homogeneous Dirichlet condition on $u$ this results in $g > 0$.

\subsection{Main results} \label{subsec:main_results}
$H^1(\Omega)$ error 
estimates of order $h$ for contact problems are given in~\cite{wohlmuth:05} under some regularity assumption on the active set, as well as more recently in~\cite{hild:12, hild:1X} for the 2D case under weaker assumptions on the solution.

For the case $\Omega \subset \R^3$, we have to assume some  regularity for the active set $\Gammaact$, in order to exclude a fractal active set. The assumption is similar to~\cite[Assumption 4.4]{wohlmuth:11}.  Given $\Sigma_h := \{ x\in \Gamma_S: \dist(x, \partial \Gammaact) \leq 2h\}$, we assume that 
\begin{equation}\label{eq:fractal_assumption}
\| u - g \|_{L^2(\Sigma_h)} 
\leq c h^{2-\varepsilon} \left| u  \right|_{H^{2-\varepsilon}(\Gamma_S)}, \quad u\in H^{2-\varepsilon}(\Gamma_S).
\end{equation} 
The abstract condition~\eqref{eq:fractal_assumption} is implied by the following criterion based on $\Gammaact$, see~\cite[Lemma 2.1]{wohlmuth:09}.
The active set  fulfils a cone property, has a piecewise $C^1$ boundary and there exists a $\delta_0 >0$ such that for all $0<\delta<\delta_0$ and $x\in \partial \Gammaact$ it holds $x + \delta n \not \in \Gammaact$, where $n$ is the outer unit normal of $\partial \Gammaact$ in $\Gamma_S$. See Figure~\ref{fig:regularity_gamaaact} for an illustration of the regularity condition. Note that for $\Omega \subset \R^2$, no similar assumption is necessary, due to recently introduced techniques in~\cite{hild:1X}.
 
 \begin{figure}[htbp]
\begin{center}
 \includegraphics[height=9em]{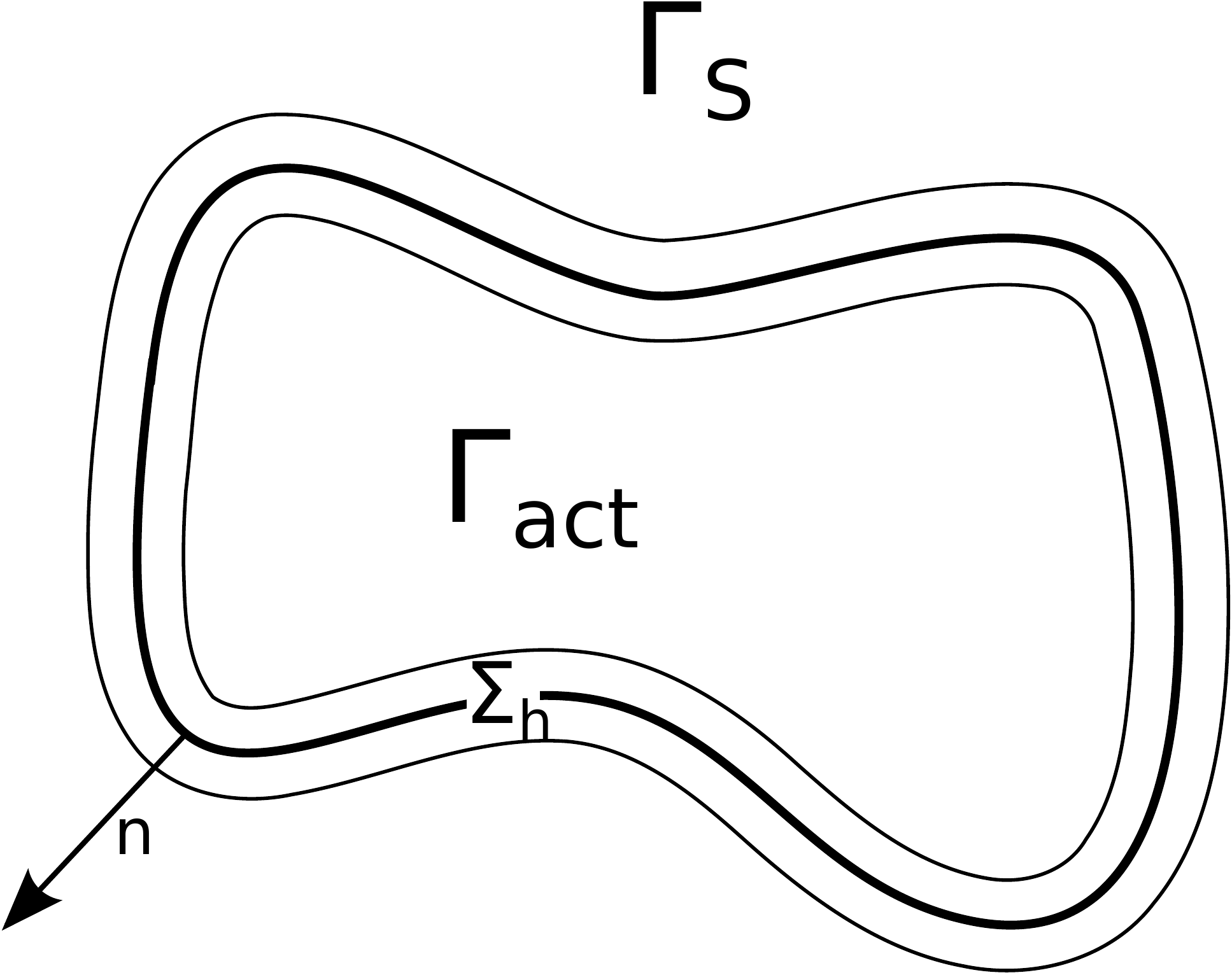}\hspace{4em}
 \includegraphics[height=8em]{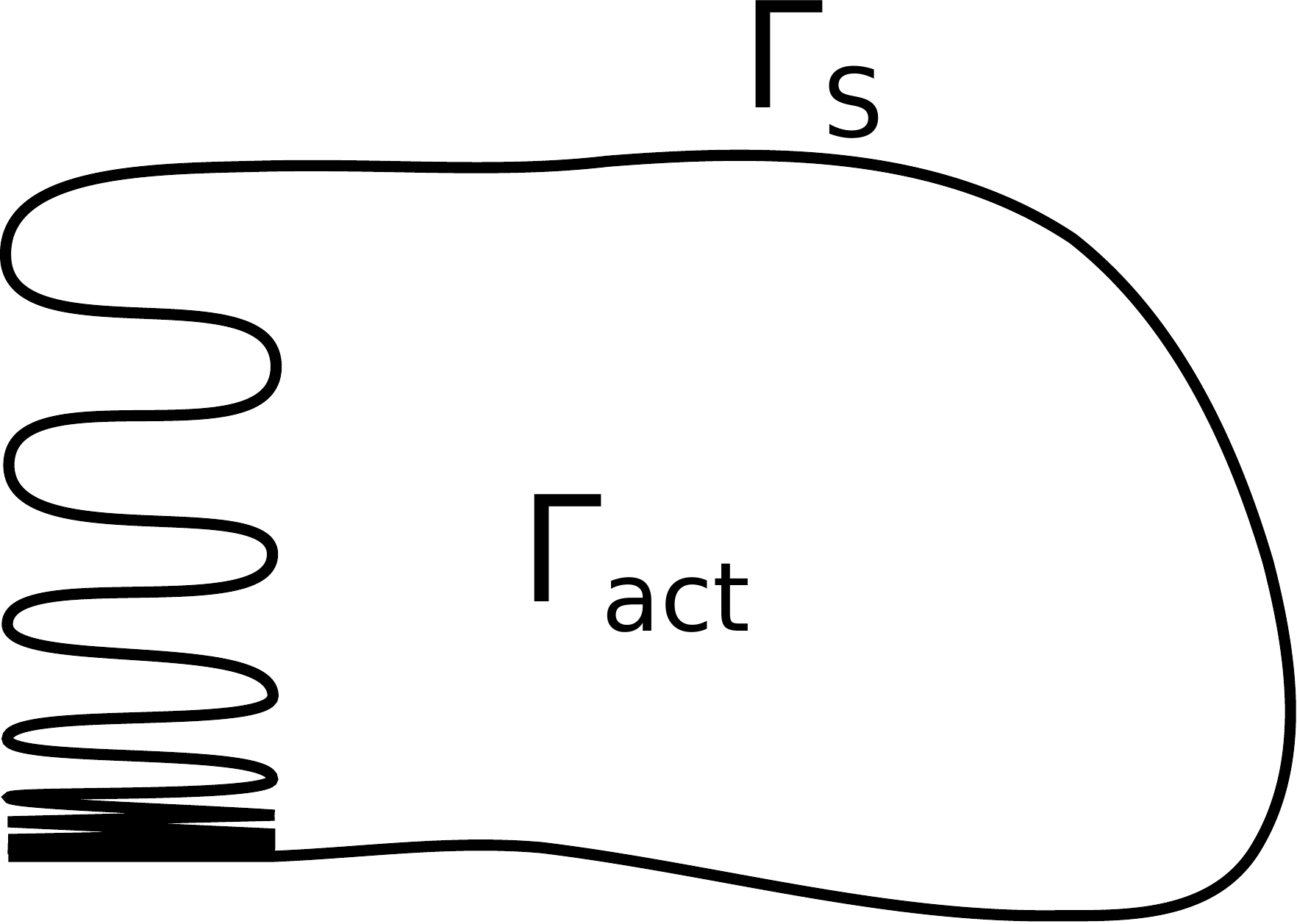}
\end{center}
 \caption{Illustration of the regularity assumption. Left: fulfilled condition and an illustration of the set $\Sigma_h$. Right: violated criterion on $\Gammaact$.}
 \label{fig:regularity_gamaaact}
 \end{figure}

In the following, $\varepsilon \in (0, 1/2]$ is fixed. 
Generic constants $0 < c, C < \infty$ are independent of the mesh size, but possibly dependent on the mesh regularity and $\varepsilon$.

The  main result of this paper is summarized in the following theorem and proved in the following sections.
\begin{theorem}\label{thm:main_result}
Let $u$ be the solution of~\eqref{vi_cont} and $u_h$ be the solution of~\eqref{vi_discrete}. Assuming $u\in H^{5/2-\varepsilon}(\Omega)$ and that Assumption~\eqref{eq:fractal_assumption} holds for $d=3$, then we get
\begin{align*}
		\| u - u_h \|_{H^{1/2}_{00}(\Gamma_S)} &\leq c  h^{3/2 - \varepsilon} \|u\|_{H^{5/2-\varepsilon}(\Omega)}.
	\end{align*} 
\end{theorem}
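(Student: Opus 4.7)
The plan is to follow the program outlined in Sections~3--5: reformulate the Signorini problem as a variational inequality on $\Gamma_S$ driven by the Steklov--Poincar\'e operator $S$, apply a Strang-type lemma because the Galerkin formulation of the continuous Schur complement differs from the discrete Schur complement $S_h$, and then combine anisotropic-norm techniques with an Aubin--Nitsche duality argument to convert the bulk regularity $u \in H^{5/2-\varepsilon}(\Omega)$ into the advertised $h^{3/2-\varepsilon}$ rate on the boundary.

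First I would pass to the boundary, setting $w \coloneq u|_{\Gamma_S}$ and $w_h \coloneq u_h|_{\Gamma_S}$. Since the $S$-induced energy norm is equivalent to $\|\cdot\|_{H^{1/2}_{00}(\Gamma_S)}$, it suffices to control $\langle S(w - w_h),\, w - w_h\rangle$. Combining a Falk-type inequality for the boundary variational inequality with the Strang defect for $S$ versus $S_h$ yields, for any admissible discrete competitor $v_h$, a bound of the schematic form
\begin{align*}
\|w - w_h\|_{H^{1/2}_{00}(\Gamma_S)}^2 \;\leq\; C \Bigl( \|w - v_h\|_{H^{1/2}_{00}(\Gamma_S)}^2 \;+\; \langle (S - S_h) w,\, w - w_h \rangle \;+\; R_\lambda(v_h, w_h) \Bigr),
\end{align*}
where $R_\lambda$ collects the complementarity terms $\langle \lambda,\, v_h - w \rangle_{\Gamma_S} + \langle \lambda,\, w - w_h \rangle_{\Gamma_S}$ with $\lambda = -\partial_n u \geq 0$. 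Each term on the right must then deliver $h^{3-2\varepsilon}$ so that, after Young absorption, the square root gives the claimed rate.

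For the approximation error I would take $v_h$ to be a nodal-type interpolant of $w$ in $W_h$; because $g$ is affine and $g_h = g$, the standard Lagrange interpolant automatically respects the obstacle constraint, and the trace regularity $w \in H^{2-\varepsilon}(\Gamma_S)$ furnishes $O(h^{3/2-\varepsilon})$ in the $H^{1/2}(\Gamma_S)$ norm. For the Strang inconsistency $(S - S_h)w$, I would use the characterization of Section~4 identifying the defect with the finite-element error of an auxiliary linear Dirichlet problem in $\Omega$, and then run an Aubin--Nitsche duality in anisotropic norms as in \cite{apel:12b, wohlmuth:12, apel:14} to obtain $O(h^{3/2-\varepsilon})$ in the $H^{-1/2}_{00}(\Gamma_S)$ dual norm, which pairs with $\|w - w_h\|_{H^{1/2}_{00}(\Gamma_S)}$ via Cauchy--Schwarz.

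The remaining piece $R_\lambda$ is the genuinely nonlinear ingredient. By complementarity, $\lambda$ is supported in $\Gammaact$ and $w = g$ there, so each factor in $R_\lambda$ effectively lives in an $O(h)$-neighbourhood of the free boundary $\partial\Gammaact$. In the three-dimensional case this localization is quantified by the fractal-exclusion assumption~\eqref{eq:fractal_assumption}, while in two dimensions the sharper technique of~\cite{hild:1X} suffices; combined with the fractional Sobolev regularity of $\lambda$ inherited from $u \in H^{5/2-\varepsilon}(\Omega)$, these integrals are bounded by $c\, h^{3-2\varepsilon} \|u\|_{H^{5/2-\varepsilon}(\Omega)}^2$. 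The main obstacle I foresee is coupling these three mechanisms coherently: the Strang inconsistency bound, the anisotropic dual argument, and the localized handling of the free boundary must each contribute exactly the same half-order gain $h^{1/2-\varepsilon}$ beyond the $H^1(\Omega)$ rate, and the duality step is delicate because the dual linear problem has only limited regularity up to the Signorini boundary, which is precisely what forces the anisotropic-norm detour rather than a naive Aubin--Nitsche.
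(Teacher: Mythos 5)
Your overall architecture is exactly the paper's: the boundary Schur-complement inequality \eqref{schur_cont}, a Strang-type splitting into a linear consistency defect plus a Falk-type Galerkin error for the boundary inequality (Lemmas \ref{lem:strang} and \ref{lem:estimate_unperturbed}), and an anisotropic-norm duality argument for the consistency part (Theorem \ref{thm:estimate:steklov} via Lemmas \ref{sz:approximation_b} and \ref{approximation_dirichlet_pb}). For $d=2$ your treatment of the complementarity term coincides with the paper's (nodal interpolant $I_h u \in K_{S,h}$ combined with the elementwise active/inactive estimates of \cite{hild:1X}). Two remarks on the linear part: the defect is not only $(S-S_h)u$ but $(\Nf - Su)-(\Nfh - S_h u)$, so your auxiliary Dirichlet problem must carry the volume data $f$ as well, as the paper's pair $(\widetilde u_h, \widetilde \lambda_h)$ does; and the duality step is not a single Aubin--Nitsche pass but two nested dual problems (a Neumann one, Corollary \ref{cor:dual_neumann}, reducing the $H^{-1/2}(\Gamma_S)$ bound to an $L^2$ bound on the strip $\strip(h)$, then a Dirichlet one with data supported in that strip) --- but this is the content of the machinery you cite, so it is essentially the same route.

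The genuine gap is your three-dimensional treatment of the complementarity term. Keeping the nodal interpolant, the term to control in Falk's lemma \eqref{falks_lemma} is $\langle \lambda, u - I_h u\rangle_{\Gamma_S}$; since the integrand vanishes on the inactive set and on elements all of whose vertices are active, it localizes to $\Sigma_h$, and Cauchy--Schwarz together with \eqref{eq:fractal_assumption} gives $\langle \lambda, u - I_h u\rangle_{\Gamma_S} \le \|\lambda\|_{L^2(\Sigma_h)}\, c\, h^{2-\varepsilon} \left| u \right|_{H^{2-\varepsilon}(\Gamma_S)}$. But Assumption \eqref{eq:fractal_assumption} says nothing about $\lambda$, so without more you can only bound $\|\lambda\|_{L^2(\Sigma_h)}$ by a constant; the complementarity term is then $O(h^{2-\varepsilon})$, and after the square root in \eqref{falks_lemma} you get $O(h^{1-\varepsilon/2})$ --- half an order short of the claimed $h^{3/2-\varepsilon}$. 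To rescue your route you would need the additional strip estimate $\|\lambda\|_{L^2(\Sigma_h)} \le c\, h^{1-\varepsilon} \left| \lambda \right|_{H^{1-\varepsilon}(\Gamma_S)}$, which requires that $\lambda$ vanishes on $\partial \Gammaact$ and a fractional Poincar\'e/scaling argument on the shrinking strip; this is a separate nontrivial lemma resting on the geometric regularity of $\partial\Gammaact$ (in the spirit of \cite[Lemma 2.1]{wohlmuth:09}), not a consequence of \eqref{eq:fractal_assumption}, and it degenerates at the endpoint $\varepsilon = 1/2$, where $\lambda \in H^{1/2}(\Gamma_S)$ has no trace on the curve $\partial\Gammaact$. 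The paper sidesteps exactly this: for $d=3$ it abandons $I_h$ in favour of the modified interpolant $\tilde I_h$, whose nodal values are set to $g$ near the active set, so that $\langle \lambda, u - \tilde I_h u\rangle_{\Gamma_S} \le 0$ drops out of Falk's bound, and \eqref{eq:fractal_assumption} is spent only on the extra approximation error $\|I_h u - \tilde I_h u\|_{H^{1/2}_{00}(\Gamma_S)}$ via an inverse inequality. Either prove the missing strip estimate for $\lambda$ or switch to the paper's modified interpolant.
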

Based on this trace estimate, we can easily improve the $L^2(\Omega)$ estimate, up to the order $h^{3/2-\varepsilon}$. 
Additionally we show optimal approximation results for the boundary flux $\left. \partial_n u \right|_{\Gamma_S}$ in the natural $H^{-1/2}(\Gamma_S)$ norm.

\section{Equivalent reformulations}
A crucial role in our analysis play three different but equivalent variational formulations.
Since  $g = g_h$ is affine, the pointwise condition $u_h \leq g_h$ can be reformulated in a variationally consistent way, using a biorthogonal dual basis. This choice yields the second variational formulation, a saddle point formulation, where the primal solution as well as the flux on the Signorini boundary are unknowns.
The third formulation, a variational formulation for the Schur complement posed on $\Gamma_S$, is adequate to bound the primal trace error.
However, the Schur complement of the discrete formulation differs from the Galerkin discretization of the continuous Schur complement.

\subsection{Saddle point formulation}
The second formulation, a saddle point problem, is  widely used for Signorini-type as well as contact problems. It can be obtained from the theory of constrained optimization, see for example~\cite{ekeland:99, kunisch:08}. 
Associated to the dual space of $W$, $M \coloneq H^{-1/2}(\Gamma_S)$, is the convex cone $M^+ \coloneq \{ \mu \in M\colon \langle v, \mu \rangle_{\Gamma_S} \geq 0, v \in W, v \geq 0\}$, where $\langle \cdot, \cdot \rangle_{\Gamma_S}$ denotes the duality pairing between $H^{1/2}_{00}(\Gamma_S)$ and $H^{-1/2}(\Gamma_S)$.

The saddle point problem reads: Find $(u, \lambda) \in  V \times  M^+$, such that
	\begin{subequations}\label{spp_cont}
	\begin{align}
		 a(u, v) + \langle v, \lambda\rangle_{\Gamma_S} &= f( v),  \quad v\in V, \label{spp_cont_line_1} \\
		 \langle u ,\mu - \lambda\rangle_{\Gamma_S} &\leq \langle  g , \mu - \lambda\rangle_{\Gamma_S}, \quad \mu \in M^+.
	\end{align}
	\end{subequations}

Let the vertices be enumerated such that the first $N_{M_h}$ vertices lie on $\Gamma_S$.
Associated to the primal Lagrange basis functions $\varphi_i, i = 1, \ldots, N_{M_h}$, which are supported on $\Gamma_S$, are biorthogonal basis functions $\psi_i \in L^2(\Gamma_S)$, $i = 1, \ldots, N_{M_h}$, satisfying $\langle \varphi_j, \psi_j \rangle_{\Gamma_S} = \delta_{ij}\,\langle \varphi_j, 1\rangle_{\Gamma_S} $, see for example~\cite{wohlmuth:01}. The discrete dual space $M_h$ is spanned by the biorthogonal basis functions $\psi_i \in L^2(\Gamma_S)$, and a uniform inf-sup stability for the discrete spaces $V_h$ and $M_h$ holds, see~\cite{wohlmuth:00}. 
  The convex cone $M^+$ is discretized as the positive span of the biorthogonal basis functions, i.e., $M_h^+ \coloneq \{\sum_{i=1}^{N_{M_h}}\alpha_i \psi_i, \alpha_i \geq 0\}$. 
 We note that a crosspoint modification is in practice not required due to our assumption that $\Gammaact$ is a compact subset of $\Gamma_S$.

The discretized saddle point formulation of~\eqref{spp_cont} then reads: Find
$(u_h, \lambda_h) \in  V_h \times  M_h^+$, such that
	\begin{subequations}\label{spp_discrete}
	\begin{align}
		 a(u_h, v_h) + \langle v_h, \lambda_h\rangle_{\Gamma_S} &= f( v_h),  \quad v_h\in V_h, \label{spp_discrete_line_1}  \\
		 \langle u_h ,\mu_h - \lambda_h\rangle_{\Gamma_S} &\leq \langle  g, \mu_h - \lambda_h\rangle_{\Gamma_S}, \quad \mu_h \in M_h^+.
	\end{align}
	\end{subequations}
	We point out, that $M_h\subset M$  but the discrete cone $M^+_h$ is not included in $M^+$.

\subsection{Reformulation as a Schur complement system} \label{subsec:reformulation}
Due to the fact, that the inequality constraint is solely located on the boundary, we can rewrite~\eqref{spp_cont} and~\eqref{spp_discrete} as Schur complement systems. 
On  the continuous level, we define the  Steklov--Poincar\'{e} operator by solving the Dirichlet problem 
\[
 -\Delta w_z = 0 \text{ in } \Omega, \quad w_z = 0 \text{ on } \Gamma_D, \quad w_z = z \text{ on } \Gamma_S,
 \]
for any $z\in H^{1/2}_{00}(\Gamma_S)$ and defining $S z \coloneq  \left. \partial_n w_z \right|_{\Gamma_S}$. The continuous Newton potential $\Nf = - \partial_n \widehat w_f$ is defined based on the solution of the homogeneous  Dirichlet problem $ -\Delta \widehat w_f = f$ in $\Omega$ and $\widehat w_f = 0$ on $\partial \Omega$. 
Based on these operators, we can formulate the Schur complement system which is a variational inequality on the Signorini boundary.
The primal trace $u_S \coloneq \left. u \right|_{\Gamma_S}\in K_{S} \coloneq \{v\in H^{1/2}_{00}(\Gamma_S)\colon  v \leq  g \}$ solves
\begin{equation}\label{schur_cont}
\langle v- u_S, S u_S  \rangle_{\Gamma_S} \geq \langle v- u_S, \Nf \rangle_{\Gamma_S},  \quad v \in  K_S.
\end{equation}

An equivalent characterization of the Steklov--Poincar\'e operator is possible as the Lagrange multiplier $\lambda_z = - Sz$ of a saddle point problem where $(w_z, \lambda_z) \in V\times M$ solves
\begin{subequations}\label{spp_SP_cont}
\begin{align}
a(w_z, v) + \langle v, \lambda_z  \rangle_{\Gamma_S} &= 0, \quad v\in V, \\
\langle w_z, \mu \rangle_{\Gamma_S} &= \langle z, \mu \rangle_{\Gamma_S}, \quad \mu \in M,
\end{align}
\end{subequations}
which corresponds to weakly imposed Dirichlet conditions, see~\cite{babuska:76}.
The continuous Newton potential can also be defined as the Lagrange multiplier of an analogue saddle point formulation.
The Steklov--Poincar\'{e} operator and the Newton potential map Dirichlet data and volume data to Neumann data, respectively. They have several applications, for example in domain decomposition and boundary element methods, see~\cite{quarteroni:99, toselli:05, steinbach:08}.

By using a mixed finite element approximation to the above Dirichlet problem~\eqref{spp_SP_cont}, we can define a mesh-dependent Steklov--Poincar\'e operator $S_h:W\rightarrow M_h$ by $S_hz := -\lambda_{z,h}$, where $(w_{z,h}, \lambda_{z,h}) \in V_h\times M_h$ solves
\begin{subequations}\label{spp_SP_discrete}
\begin{align}
a(w_{z,h}, v_h) + \langle v_h, \lambda_{z,h} \rangle_{\Gamma_S} &= 0, \quad v_h\in V_h, \\
\langle w_{z,h}, \mu_h \rangle_{\Gamma_S} &= \langle z, \mu_h \rangle_{\Gamma_S}, \quad \mu_h \in M_h.
\end{align}
\end{subequations}
An analogue discretization yields a mesh-dependent Newton potential $\Nfh$. Denote by $W_h$ the trace space of $V_h$. Up to scaling factors, the matrix formulation for $\left. S_h\right|_{W_h}$ and $\Nfh$ coincide with the discrete Schur complement system of the matrix formulation of~\eqref{vi_discrete} by construction.  
The uniform continuity of $S_h$ directly follows from the saddle point theory using the inf-sup stability of the discrete spaces, while the uniform $W_h$-ellipticity follows using  basic properties of discrete harmonic functions, e.g.,~\cite[Lemma 4.10]{toselli:05}. Precisely, it holds $\langle v_h, S_h  v_h \rangle_{\Gamma_S}  = a( w_{v,h} ,  w_{v,h})$, where $w_{v,h} \in V_h$ is the discrete harmonic extension of $v_h\in W_h$, hence $\langle v_h, S_h  v_h \rangle_{\Gamma_S}  = \left| w_{v,h}\right|_{H^1(\Omega)}^2 \geq c \|v_h\|_{H^{1/2}_{00}(\Gamma_S)}^2$.

The Schur complement system of~\eqref{vi_discrete} can be represented as an approximative discretization of~\eqref{schur_cont}. For $K_{S, h} := \{v_h \in W_h:  v_h \leq  g\}$, find $u_{S,h} \in K_{S, h}$, such that
\begin{equation}\label{schur_discrete_op}
\langle v_h- u_{S,h}, S_h u_{S,h}  \rangle_{\Gamma_S} \geq \langle v_h- u_{S,h}, \Nfh \rangle_{\Gamma_S},  \quad v_h \in  K_{S,h}.
\end{equation}
The three weak formulations~\eqref{vi_cont},~\eqref{spp_cont} and~\eqref{schur_cont} are equivalent as well as the three discrete variational problems~\eqref{vi_discrete},~\eqref{spp_discrete} and~\eqref{schur_discrete_op}.

\section{Application of a Strang lemma}
While $u$ solves the variational inequality~\eqref{schur_cont} with the operators $S$ and $N$, the discrete solution $ u_h $ solves the variational inequality~\eqref{schur_discrete_op} with the mesh dependent operators $S_h$ and $N_h$. 
In this subsection, we show that the $H^{1/2}_{00}(\Gamma_S)$ error can be bounded by two terms. 
The first term is the $H^{-1/2}(\Gamma_S)$ norm of the difference between $\Nf - S\! \left( \left. \Tr u \right|_{\Gamma_S} \right)  = \lambda$ and $\Nfh - S_h\! \left( \left. \Tr u \right|_{\Gamma_S} \right) \eqcolon \widetilde \lambda_h \in M_h$. Note that $\widetilde \lambda_h$ is the discrete dual solution of the linear saddle point problems defining the Dirichlet--Neumann map, see~\eqref{spp_SP_discrete}. Associated with $\widetilde \lambda_h$ is $\widetilde u_h = {\widehat{w}_{f,h}} + w_{\left. \Tr u \right|_{\Gamma_S}, h}$ and $(\widetilde u_h, \widetilde \lambda_h) \in V_h \times M_h$ solves
\begin{align*}
	a( \widetilde u_h , v_h) + \langle v_h,\widetilde \lambda_h  \rangle_{\Gamma_S} &= f(v_h), \quad v_h \in V_h, \\
	\langle \widetilde u_h, \mu_h \rangle_{\Gamma_S} &= \langle u,\mu_h \rangle_{\Gamma_S}, \quad \mu_h \in M_h.
\end{align*}

The second term is the discretization error of the  variational inequality on the boundary~\eqref{schur_cont}. Let $ \bar u_h \in K_{S,h}$ be such that
\begin{align}\label{eq:unperturbed}
\langle  v_h- \bar u_h, S  \bar u_h  \rangle_{\Gamma_S} \geq \langle  v_h- \bar u_h, \Nf \rangle_{\Gamma_S},  \quad  v \in  K_{S, h}.
\end{align}
\begin{lemma} \label{lem:strang}
The trace error of the Signorini problem~\eqref{vi_cont} can be bounded by
\begin{align*}
\| u - u_h \|_{H^{1/2}_{00}(\Gamma_S)} \leq   c \| \lambda - \widetilde \lambda_h \|_{H^{-1/2}(\Gamma_S)} + c \| u - \bar u_h\|_{H^{1/2}_{00}(\Gamma_S)}.
\end{align*}
\end{lemma}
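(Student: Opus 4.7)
The plan is to bound the error via the triangle inequality
\[
\| u - u_h \|_{H^{1/2}_{00}(\Gamma_S)} \leq \| u - \bar u_h \|_{H^{1/2}_{00}(\Gamma_S)} + \| \bar u_h - u_h \|_{H^{1/2}_{00}(\Gamma_S)},
\]
so that the first summand directly yields the second contribution in the lemma and only $\|\bar u_h - u_h\|_{H^{1/2}_{00}(\Gamma_S)}$ has to be estimated. The auxiliary iterate $\bar u_h$ is designed precisely so that $\bar u_h$ and $u_h$ both live in the same discrete admissible set $K_{S,h}$ but solve Schur--complement variational inequalities with different operators ($S,\Nf$ versus $S_h,\Nfh$). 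This is the classical setting for a Strang-type consistency argument, and the $W_h$-ellipticity of the discrete Steklov--Poincar\'e operator recalled at the end of Section~3,
\[
c\, \| u_h - \bar u_h \|_{H^{1/2}_{00}(\Gamma_S)}^2 \leq \langle u_h - \bar u_h,\, S_h( u_h - \bar u_h)\rangle_{\Gamma_S},
\]
is the right-hand quantity to attack.

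To exploit the ellipticity, I would test~\eqref{schur_discrete_op} with $v_h = \bar u_h \in K_{S,h}$ and test~\eqref{eq:unperturbed} with $v_h = u_h \in K_{S,h}$; after sign flips and subtraction the two inequalities combine into
\[
\langle u_h - \bar u_h,\, S_h u_h - S \bar u_h\rangle_{\Gamma_S} \leq \langle u_h - \bar u_h,\, \Nfh - \Nf\rangle_{\Gamma_S}.
\]
Adding and subtracting $S\bar u_h$ on the left, i.e.\ splitting $S_h(u_h - \bar u_h) = (S_h u_h - S\bar u_h) + (S - S_h)\bar u_h$, turns this into
\[
\langle u_h - \bar u_h,\, S_h(u_h - \bar u_h)\rangle_{\Gamma_S} \leq \langle u_h - \bar u_h,\, (\Nfh - \Nf) + (S - S_h)\bar u_h \rangle_{\Gamma_S}.
\]

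The decisive algebraic step is then to identify the right-hand quantity with the dual consistency error $\widetilde\lambda_h - \lambda$. Using the defining relations $\lambda = \Nf - S u|_{\Gamma_S}$ and $\widetilde\lambda_h = \Nfh - S_h u|_{\Gamma_S}$, inserting $\pm(S - S_h) u|_{\Gamma_S}$ yields
\[
(\Nfh - \Nf) + (S - S_h)\bar u_h = (\widetilde\lambda_h - \lambda) + (S - S_h)(\bar u_h - u|_{\Gamma_S}).
\]
By the duality pairing and the uniform continuity of $S$ and $S_h$ from $H^{1/2}_{00}(\Gamma_S)$ into $H^{-1/2}(\Gamma_S)$, the remainder is controlled by $c\,\|u - \bar u_h\|_{H^{1/2}_{00}(\Gamma_S)}$. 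Combining with the ellipticity lower bound and a Young-type absorption gives
\[
\|u_h - \bar u_h\|_{H^{1/2}_{00}(\Gamma_S)} \leq c\,\|\lambda - \widetilde\lambda_h\|_{H^{-1/2}(\Gamma_S)} + c\,\|u - \bar u_h\|_{H^{1/2}_{00}(\Gamma_S)},
\]
and a final triangle inequality delivers the claim.

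The main obstacle is precisely the algebraic identification in the last paragraph: one has to recognize that testing the two variational inequalities cross-wise produces only $\Nfh - \Nf$ and $(S-S_h)\bar u_h$, which do not immediately match the natural dual error $\widetilde\lambda_h - \lambda$ associated with the linear Dirichlet--Neumann problem~\eqref{spp_SP_discrete}; the insertion of $\pm(S-S_h)u|_{\Gamma_S}$ is what reduces the perturbation to this dual consistency error, modulo a leftover of the same type as the Galerkin discretization error $\|u - \bar u_h\|_{H^{1/2}_{00}(\Gamma_S)}$ already appearing on the right-hand side. Apart from this identification the argument is a standard variational-inequality Strang lemma.
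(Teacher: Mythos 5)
Your proposal is correct and follows essentially the same route as the paper: triangle inequality, $W_h$-ellipticity of $S_h$, cross-testing the two variational inequalities with $v_h=\bar u_h$ and $v_h=u_h$, and then inserting $\pm(S-S_h)u|_{\Gamma_S}$ to identify the consistency term $\lambda - \widetilde\lambda_h$ plus a remainder controlled by the continuity of $S$ and $S_h$. The only cosmetic difference is the order of operations (the paper applies the duality pairing bound before the $\pm$ insertion, you after), which changes nothing in substance.
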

\mybeginproof
The proof of this lemma follows the lines of~\cite[Theorem 3.2]{steinbach:12b}. Since the proof is fundamental, we work  it out. 
We start with the trivial triangle inequality \[\|u-u_h\|_{H^{1/2}_{00}(\Gamma_S)} \leq \|u - \bar u_h\|_{H^{1/2}_{00}(\Gamma_S)} + \|\bar u_h - u_h\|_{H^{1/2}_{00}(\Gamma_S)}.\] For the second term $\bar u_h - u_h$, we use the $W_h$-ellipticity of the mesh-dependent Steklov--Poincar\'{e} operator and apply the  variational inequalities~\eqref{schur_discrete_op} and~\eqref{eq:unperturbed}:
\begin{align*}
&c \|\bar u_h - u_h\|_{H^{1/2}_{00}(\Gamma_S)}^2 \leq \langle \bar u_h - u_h, S_h(\bar u_h - u_h)\rangle_{\Gamma_S}  \notag\\
&\qquad \leq   \langle \bar u_h - u_h, S_h\bar u_h\rangle_{\Gamma_S} + \langle \bar  u_h - u_h, \Nf - \Nfh \rangle_{\Gamma_S} - \langle \bar u_h - u_h, S  \bar u_h \rangle_{\Gamma_S}\notag\\
&\qquad = \langle  \bar u_h - u_h,  \Nf - S  \bar u_h - (\Nfh - S_h\bar u_h) \rangle_{\Gamma_S} \notag \\&\qquad \leq
\|  \bar u_h - u_h\|_{H^{1/2}_{00}(\Gamma_S)} \|  \Nf - S  \bar u_h - (\Nfh - S_h\bar u_h) \|_{H^{-1/2}(\Gamma_S)} .
\end{align*}
Using the boundedness of the operators and once again the triangle inequality, we get
\begin{align*}
\|\bar u_h - u_h\|_{H^{1/2}_{00}(\Gamma_S)} &\leq  
 c \|  \Nf - S  u - (\Nfh - S_h u) \|_{H^{-1/2}(\Gamma_S)} \notag\\&\qquad + c \| S(u -  \bar u_h) \|_{H^{-1/2}(\Gamma_S)}  +  c\| S_h(u -  \bar u_h)\|_{H^{-1/2}(\Gamma_S)} \notag\\ 
&\leq c \| \lambda - \tilde \lambda_h\|_{H^{-1/2}(\Gamma_S)} + c \|u - \bar u_h \|_{H^{1/2}_{00}(\Gamma_S)}. \notag \myendproof 
\end{align*}
A bound of $u - \bar u_h$ can be shown using Falk's lemma, see,~\cite[Theorem 1]{falk:74}, which is an analogue result to C\'ea's lemma for variational inequalities.  
Since the discretization of the variational inequality is conforming in the sense, that $K_{S, h} \subset K_S$, Falk's lemma reads 
\begin{align}\label{falks_lemma}
\| u - \bar u_h \|_{H^{1/2}_{00}(\Gamma_S) }& \leq  
 c \inf_{v_h \in K_{S,h}}\left( \| u - v_h \|_{H^{1/2}_{00}(\Gamma_S)} + \langle \lambda , u - v_h \rangle_{\Gamma_S}^{1/2} \right).
\end{align}

\begin{lemma} \label{lem:estimate_unperturbed}
Let $u \in K_S$ be the solution to the variational inequality on the boundary~\eqref{schur_cont} and $\bar u_h \in K_{S, h}$ the Galerkin approximation, see Equation~\eqref{eq:unperturbed}. Assuming $u\in  H^{5/2-\varepsilon}(\Omega)$ and that Assumption~\eqref{eq:fractal_assumption} holds for $d=3$, then we get
\[
\| u - \bar u_h\|_{H^{1/2}_{00}(\Gamma_S)} \leq c h^{3/2- \varepsilon}   \left|u\right|_{H^{5/2-\varepsilon}(\Omega)}.\]
\end{lemma}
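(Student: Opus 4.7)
The plan is to apply Falk's lemma~\eqref{falks_lemma} with $v_h := I_h u$, the nodal Lagrange interpolant of $u$ on $\Gamma_S$. First I verify $v_h \in K_{S,h}$: at every vertex $x_i$ on $\Gamma_S$ we have $u(x_i) \leq g(x_i)$, and since $g$ is affine so that $I_h g = g$, the piecewise-linear function $I_h u$ satisfies $I_h u \leq g$ on all of $\Gamma_S$. The interpolation error term in~\eqref{falks_lemma} is then controlled by classical results together with the trace theorem, namely
\begin{align*}
\|u - I_h u\|_{H^{1/2}_{00}(\Gamma_S)} \leq c\,h^{3/2-\varepsilon} |u|_{H^{2-\varepsilon}(\Gamma_S)} \leq c\,h^{3/2-\varepsilon}\|u\|_{H^{5/2-\varepsilon}(\Omega)}.
\end{align*}

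The key step is to handle the consistency term $\langle \lambda, u - I_h u\rangle_{\Gamma_S}$. I would first exploit complementarity $\langle \lambda, u-g\rangle_{\Gamma_S}=0$ and $g = I_h g$ to obtain
\begin{align*}
\langle \lambda, u - I_h u\rangle_{\Gamma_S} = \langle \lambda, I_h(g - u)\rangle_{\Gamma_S}.
\end{align*}
Since $\lambda$ is supported in $\Gammaact$ and, because $(g-u)(x_i) = 0$ at every active node, $I_h(g-u)$ vanishes on any element whose vertices all lie in $\Gammaact$, the remaining contributions are concentrated on the transition strip $\Sigma_h$ around $\partial\Gammaact$. A Cauchy--Schwarz on $\Sigma_h$ then gives
\begin{align*}
\langle \lambda, I_h(g-u)\rangle_{\Gamma_S} \leq \|\lambda\|_{L^2(\Sigma_h)} \,\|I_h(g-u)\|_{L^2(\Sigma_h)}.
\end{align*}
The $L^2$-stability of $I_h$ combined with the fractal assumption~\eqref{eq:fractal_assumption} controls the second factor by $c\,h^{2-\varepsilon} |u|_{H^{2-\varepsilon}(\Gamma_S)}$. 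For the first factor I would exploit that $\lambda = -\partial_n u|_{\Gamma_S} \in H^{1-\varepsilon}(\Gamma_S)$ (from $u \in H^{5/2-\varepsilon}(\Omega)$) and that $\lambda \equiv 0$ on $\Gamma_S \setminus \Gammaact$, so that a Hardy-type trace estimate near $\partial\Gammaact$ yields $\|\lambda\|_{L^2(\Sigma_h)} \leq c\,h^{1-\varepsilon}\|u\|_{H^{5/2-\varepsilon}(\Omega)}$. The product is of order $h^{3-2\varepsilon}\|u\|_{H^{5/2-\varepsilon}(\Omega)}^2$, whose square root delivers the desired $h^{3/2-\varepsilon}$ rate.

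The main obstacle is establishing the sharp decay $\|\lambda\|_{L^2(\Sigma_h)} \lesssim h^{1-\varepsilon}$. In two dimensions $\partial\Gammaact$ is a finite set of points and the Sobolev embedding $H^{1-\varepsilon}(\Gamma_S) \hookrightarrow C^{1/2-\varepsilon}$ together with $\lambda = 0$ at those points gives the bound directly by pointwise H\"older decay. In three dimensions $\partial\Gammaact$ is a curve and one has to combine the geometric regularity of the active set encoded in Assumption~\eqref{eq:fractal_assumption} (cone property and piecewise $C^1$ boundary) with a weighted Hardy inequality to obtain the same rate. Note that without this sharpened multiplier bound the crude estimate $\|\lambda\|_{L^2(\Sigma_h)} \leq \|\lambda\|_{L^2(\Gamma_S)}$ would only yield the suboptimal rate $h^{1-\varepsilon/2}$, so exploiting that $\lambda$ vanishes off the active set is essential.
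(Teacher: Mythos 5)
Your strategy---Falk's lemma with the nodal interpolant $I_h u$, the complementarity identity $\langle \lambda, u - I_h u\rangle_{\Gamma_S} = \langle \lambda, I_h(g-u)\rangle_{\Gamma_S}$, and a Cauchy--Schwarz estimate localized to the strip $\Sigma_h$---is genuinely different from the paper's proof, which never estimates $\|\lambda\|_{L^2(\Sigma_h)}$ at all: for $d=2$ the paper invokes element-wise estimates of Hild and Renard carrying the factor $\min\bigl(|T|^{1/2}/|T^{\mathrm{inact}}|^{1/2},\, |T|^{1/2}/|T^{\mathrm{act}}|^{1/2}\bigr)$, which is bounded since one of the two areas is at least $h_T/2$, and for $d=3$ it replaces $I_h$ by a modified interpolant $\tilde I_h$ that takes the value $g(x_i)$ near $\Gammaact$, so the consistency term is nonpositive by construction and only $\|I_h u - \tilde I_h u\|_{H^{1/2}_{00}(\Gamma_S)}$ remains, handled by an inverse estimate and~\eqref{eq:fractal_assumption}. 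Your factorization has the right arithmetic ($h^{1-\varepsilon}\cdot h^{2-\varepsilon}$ under the square root gives $h^{3/2-\varepsilon}$), but as it stands it does not prove the lemma under the stated hypotheses.

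The crux of your argument, $\|\lambda\|_{L^2(\Sigma_h)} \leq c h^{1-\varepsilon}\|u\|_{H^{5/2-\varepsilon}(\Omega)}$, is asserted rather than proved, and for $d=3$ it does not follow from what the lemma assumes. Assumption~\eqref{eq:fractal_assumption} is a statement about $u-g$ only; it grants no control of $\lambda$ and no geometric regularity of $\partial\Gammaact$---the cone property and piecewise $C^1$ boundary form a \emph{sufficient criterion} for~\eqref{eq:fractal_assumption}, not part of the assumption itself. The fractional Hardy inequality $\|\dist(\cdot,\partial\Gammaact)^{-(1-\varepsilon)}\lambda\|_{L^2(\Gammaact)} \leq c\|\lambda\|_{H^{1-\varepsilon}(\Gamma_S)}$ you would need requires Lipschitz-type regularity of $\partial\Gammaact$, and it fails at the endpoint $1-\varepsilon = 1/2$, i.e.\ exactly at $\varepsilon = 1/2$, which the paper admits. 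Secondly, for $d=2$ the lemma assumes nothing about $\Gammaact$ beyond compactness, yet your argument needs both $|\Sigma_h| \leq ch$ (your claim that $\partial\Gammaact$ is a finite set of points) and a bound of fractal-assumption type for the factor $\|I_h(g-u)\|_{L^2(\Sigma_h)}$; neither is available for, say, a Cantor-like active set, which is precisely the situation the Hild--Renard min-trick is designed to handle without hypotheses. A smaller, fixable point: nodal interpolation is not $L^2$-stable, so the bound on $\|I_h(g-u)\|_{L^2(\Sigma_h)}$ must be run through nodal values, a local $L^\infty$--$L^2$ inverse-type bound, and the interpolation error, as the paper effectively does in its $d=3$ argument. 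In short, your proof is viable under the geometric criterion on $\Gammaact$ and for $\varepsilon < 1/2$, but to obtain the lemma in the stated generality the missing ingredients are exactly the paper's two devices: the element-wise min estimates in $d=2$ and the modified interpolant in $d=3$.
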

\mybeginproof
This type of estimate was already considered  in the context of boundary element methods, in~\cite[Theorem 3.1]{spann:93} and~\cite[Section 3]{steinbach:12}, where additional assumptions on the boundary of the active set were made. To keep this article self-contained, we present a proof, based on techniques for $H^1(\Omega)$ estimates.

In this proof only the Signorini boundary $\Gamma_S$ is considered, so any notation refers to $\R^{d-1}$. We introduce the triangulation $\mathcal{T}_h^S$ on the Signorini boundary which is induced by the triangulation of $\Omega$.
We note, that the induced triangulation on the Signorini boundary is also shape-regular and denote by $h_T$ the diameter of an element $T\in \mathcal{T}_h^S$.

The proof is carried out for the case $d=2$ and $d=3$ separately. We start with $d=2$ and use recently shown local $L^1$  and $L^2$ estimates from~\cite{hild:1X}.
Using $v_h = I_h  u \in K_{S, h}$, the piecewise linear nodal interpolation, in Falk's lemma~\eqref{falks_lemma}, it remains to bound 
\begin{align*}
\langle \lambda , u - I_h u \rangle_{\Gamma_S} = \sum_{T \in \mathcal{T}_h^S} \int_{T} \lambda( u - I_hu ) ~ \mathrm{d}\Gamma.
\end{align*}
One of the main ideas of this proof is to derive two estimates for each element, where dependent of the measure of the active area $|T \cap \Gammaact|$ one of the two estimates is applied. 
Given any $T  \in \mathcal{T}_h^S$, define the local active area $T^{\mathrm{act}} = T\cap \Gammaact$ and  the local inactive area $T^{\mathrm{inact}} = T \backslash T^{\mathrm{act}}$, see Figure~\ref{fig:element_splitting}.
\begin{figure}[!htbp]
\begin{center}
\includegraphics[width=.7\textwidth]{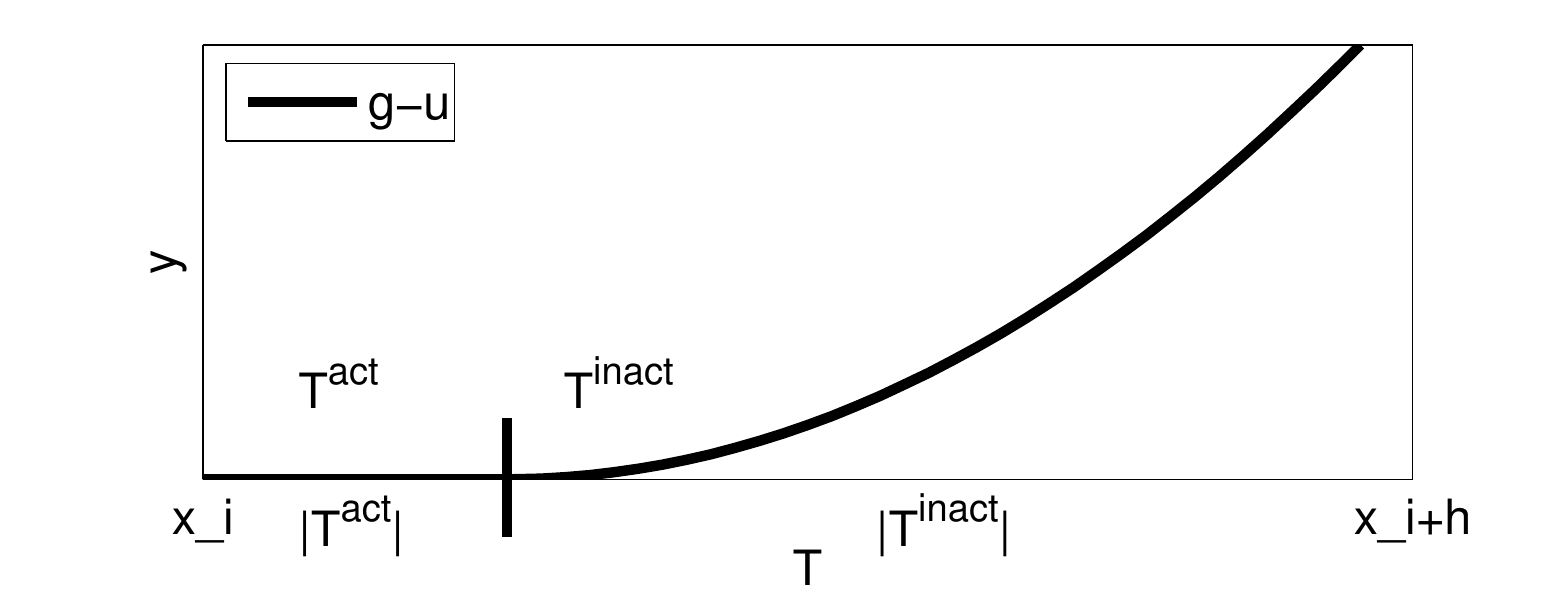}
\end{center}	
\caption{Active and inactive area within one element $T$.}
\label{fig:element_splitting}
\end{figure}
Note that by construction only the elements with $\left| T^{\mathrm{act}} \right| > 0$ and $\left| T^{\mathrm{inact}} \right| > 0$ are of interest. 
Recently developed nonstandard estimates for $u$ and $\lambda$, see,~\cite[Lemma 1, Lemma 2]{hild:1X}, yield
\begin{align*}
 &\int_{T} \lambda( u - I_hu ) ~ \mathrm{d}\Gamma  \\&\qquad \leq c \min\left( \frac{\left|T\right|^{1/2}}{ \left| T^{\mathrm{inact}} \right|^{1/2}}, \frac{\left|T\right|^{1/2}}{ \left| T^{\mathrm{act}} \right|^{1/2}} \right) h_T^{3-2 \varepsilon} \left( \left| \lambda \right|_{H^{1-\varepsilon}(T)}^2 +  \left| u \right|_{H^{2-\varepsilon}(T)}^2 \right).
\end{align*}

Since $\left| T^{\mathrm{act}} \right| + \left| T^{\mathrm{inact}} \right| = h_T$, one of the measures is greater equal than $h_T/2$.
Summing over the elements and applying the trace inequality yields the desired estimate.

For the second case, $d=3$, we define a modified interpolation operator, as used in~\cite{wohlmuth:05}. Note that we are interested in a higher approximation order, so we cannot directly apply these results. Let $x_i$, $i=1,\ldots,N_{M_h}$, denote the vertices in the interior of $\Gamma_S$, we define $\tilde{I}_h u \in K_{S,h}$ as
\[
(\tilde{I}_h u)(x_i) =
\begin{cases}
u(x_i), \quad \text{ for } \supp \varphi_i \subset \overline{ \Gamma_C \backslash \Gammaact}, \\
g(x_i), \quad \text{ otherwise,}
\end{cases}
\]
see Figure~\ref{fig:mod_interpolation}.
\begin{figure}[!htbp]
\begin{center}
\includegraphics[width=.7\textwidth]{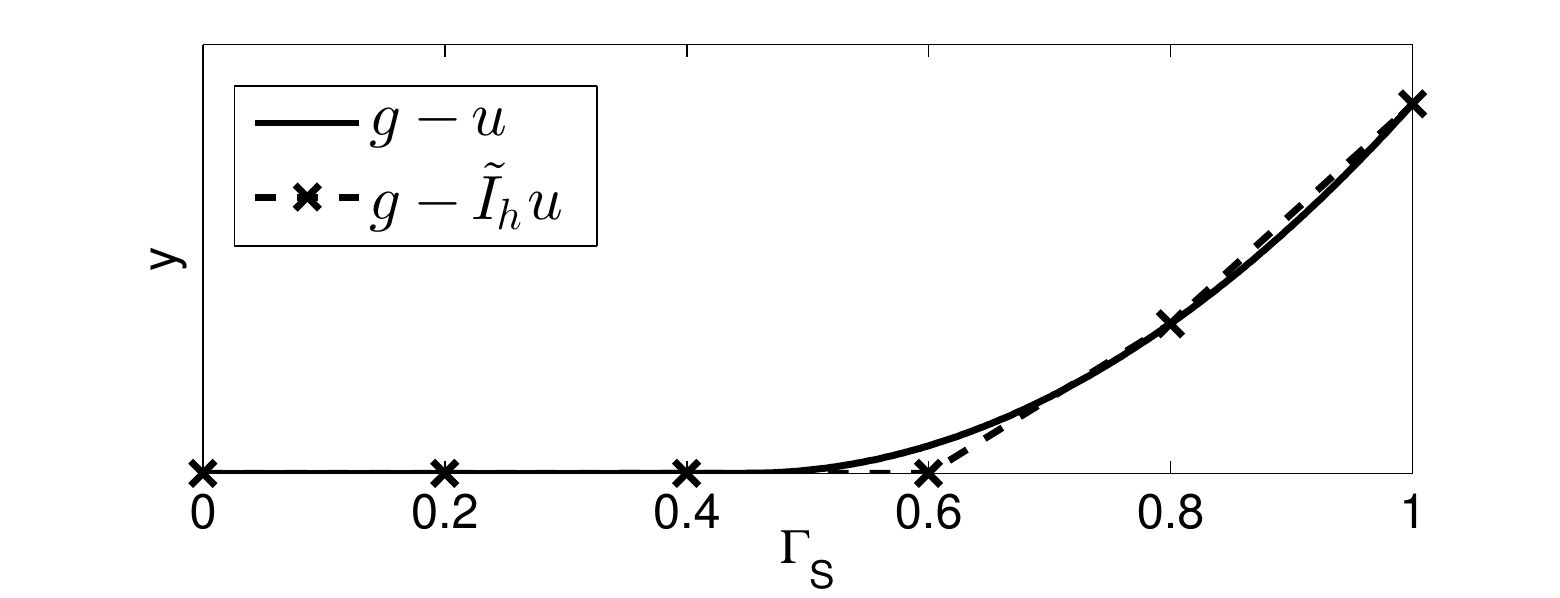}
\end{center}	
\caption{Sketch of the modified interpolation operator $\tilde{I}_h$}
\label{fig:mod_interpolation}
\end{figure}
Note that, for $h$ sufficiently small, $\dist(\Gammaact, \partial \Gamma_S) > h$ and the operator is well defined. Again, we apply Falk's lemma, see~\eqref{falks_lemma}.
By construction, $\tilde{I}_h u$ fulfils $\langle  u - \tilde{I}_h u , \lambda \rangle_{\Gamma_S} \leq 0$ and it remains to estimate $\|u - \tilde{I}_h u \|_{H^{1/2}_{00}(\Gamma_S)}$.  Using the piecewise linear nodal interpolation $I_h u$ 
\[
\|u - \tilde{I}_h u \|_{H^{1/2}_{00}(\Gamma_S)} \leq
\|u - I_h u \|_{H^{1/2}_{00}(\Gamma_S)} + \|I_h u - \tilde{I}_h u \|_{H^{1/2}_{00}(\Gamma_S)},
\]
only the second term remains to be considered. An inverse estimate yields
\[
\|I_h u - \tilde{I}_h u \|_{H^{1/2}_{00}(\Gamma_S)}^2 \leq c h \sum_{k=1}^{N_{M_h}} (u(x_k) -  (\tilde{I}_h u)(x_k) )^2 , 
\]
where 
$u(x_k) =  (\tilde{I}_h u)(x_k) $ if $\dist(x_k, \partial \Gammaact) > h$. Finally, using $u(x) - g(x) = 0$ for $x\in \Gammaact$ and  applying the assumption~\eqref{eq:fractal_assumption}, we get
\begin{align*}
\|I_h u - \tilde{I}_h u \|_{H^{1/2}_{00}(\Gamma_S)}^2 &\leq c h \sum_{k=1}^{N_{M_h}} (u(x_k) - g(x_k) )^2 \leq c h^{-1} \|u - g \|_{L^2(\Sigma_h)}^2 \\& 
\leq c h^{3-2\varepsilon} \left| u\right|_{H^{2-\varepsilon}(\Omega)}^2. \myendproof
\end{align*}

\section{A priori estimate of the primal trace}
In   this section, an upper bound for $\| \lambda - \widetilde \lambda_h \|_{H^{-1/2}(\Gamma_S)}$ is shown which concludes the primal trace estimate in Lemma~\ref{thm:main_result}. The Lagrange multiplier arises from a linear Dirichlet problem with a weak enforcement of the boundary values which is covered by the problem formulation in~\cite{wohlmuth:12}. However, the required regularity of $B^{5/2}_{2,1}(\Omega)$ is not given in our case. Thus we have to generalize these results. We follow the lines of~\cite{wohlmuth:12} but will not work with the Besov space $B^{5/2}_{2,1}(\Omega)$. Reducing the regularity from $B^{5/2}_{2,1}(\Omega)$ to $H^{5/2-\varepsilon}(\Omega)$ automatically results in a reduced convergence order, but we do not loose a log-term. 

The first two subsections collect some technical tools for the  proof which is carried out in Subsection~\ref{sec:main_proof}. Firstly, for a Scott--Zhang operator, we show optimal approximation results in anisotropic norms. Secondly, for two dual problems, estimates in these norms are shown. As a corollary of the main result, we show improved rates in the $L^2(\Omega)$ norm. 

\subsection{Anisotropic norms and quasi-interpolation results} \label{subsec:anisotropic}
Estimating the dual solution on the boundary can be related to bounds of the primal solution in a neighbourhood of $\Gamma$. We define strips around the boundary of width $\delta$ by $\strip(\delta) \coloneq \{x\in \Omega: \dist(x, \Gamma) \leq \delta\}$. 
Using a dual Neumann problem with local volume data,  we can relate the dual error to the primal error on a strip $\strip(ch)$. 
As a technical tool to derive local error estimates for the dual problems on these strips, we use anisotropic norms as in~\cite{ wohlmuth:12, human:13,waluga:13}.  We simplify the original definition, which was based on a technical decomposition of the domain into "cylinders". Instead, we use an intuitive decomposition into triangles and pyramids, based on the faces of the polygonal domain. 
 
For a formal definition, we first decompose the domain $\Omega$ into a set of patches which are triangles if $d=2$ and pyramids if $d=3$. 
Each patch is supposed to connect one facet with the barycentre of $\Omega$.
Since $\Omega$ is convex the barycentre $x_c$ lies in the interior of $\Omega$. 
Let an enumeration of the facets be given by $\gamma_i$, $i = 1,\ldots,N_\gamma$ and consider one facet $\gamma_i$. The patch $\Omega_i$ is the triangle respectively pyramid with $\gamma_i$ as base side and $x_c$ as the top. Obviously $\overline{\Omega} = \cup_{i=1}^{N_\gamma} \overline{\Omega_i}$, see Figure~\ref{fig:patches}.
\begin{figure}[htbp]
\begin{center}
\includegraphics[width=.6\textwidth]{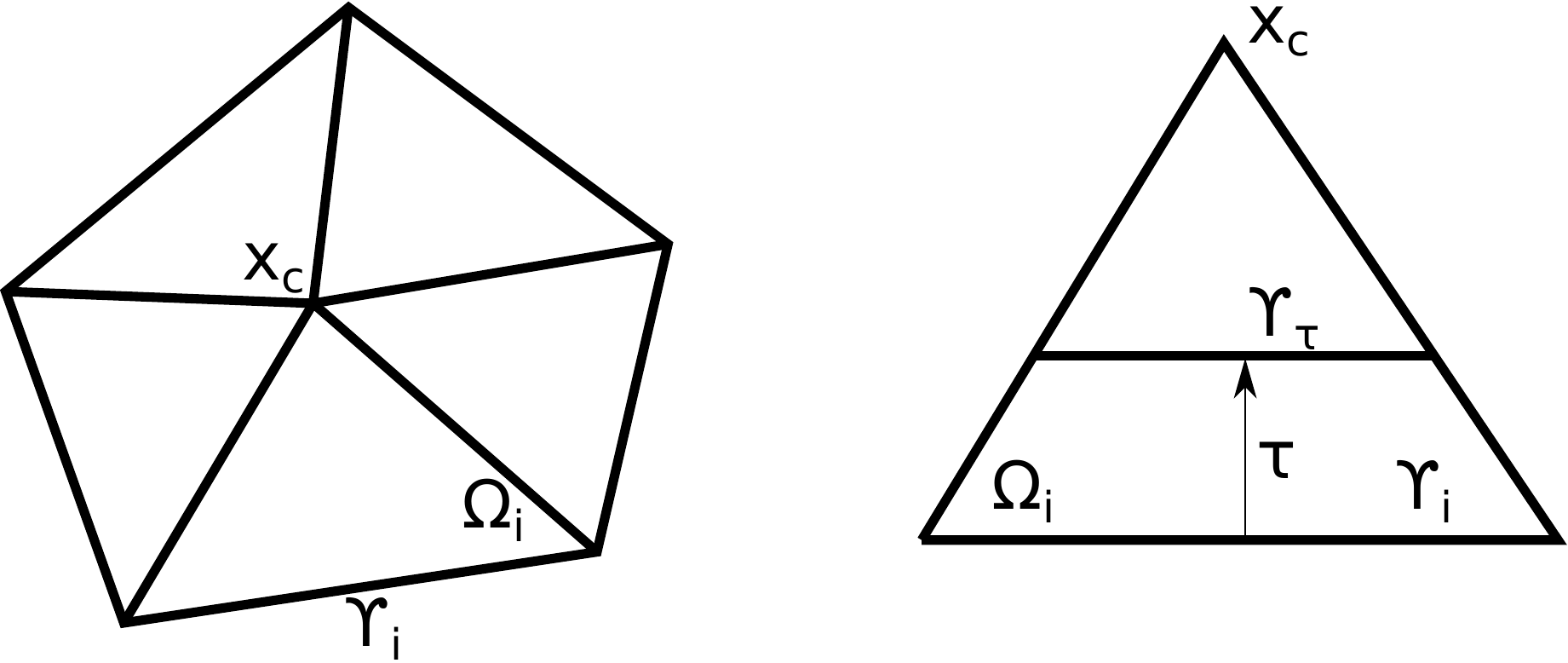}
\end{center}
\caption{Left: Decomposition of a 2D domain into the patches. Right: One patch after a suitable rotation and the necessary notation}
\label{fig:patches}
\end{figure}
For each patch $\Omega_i$, we define the anisotropic norm $L(p,2;\Omega_i)$ based on a decomposition of the patch into a $(d-1)$-dimensional part parallel to the facet $\gamma_i$ and the one dimensional distance to the facet. Given $i \in \{1,\ldots,N_\gamma\}$, without any loss of generality, we assume that $\gamma_i$ lies in the $x_1,\ldots, x_{d-1}$-plane and $\Omega$ lies in the positive half space $\{(x', \tau), x' \in \R^{d-1}, \tau \geq 0\}$. 
We denote  $\gamma_{ \tau} \coloneq \{  (x',  \tau) \in \Omega_i, x'\in \mathbb{R}^{d-1}\}$ for $ \tau \geq 0$, the part parallel to $\gamma_i$.
 We have $\gamma_{ \tau} = \emptyset$ for  $ \tau < 0$ and $ \tau \geq D$, where $D$ is the diameter of $\Omega$.
By the Fubini--Tonelli formula, the integral over $\Omega$ can be decomposed as
\[
\int_{\Omega_i} v~\mathrm{d}x = \int_{\tau=0}^D \int_{\gamma_{\tau}} v~\mathrm{d}\mu ~\mathrm{d}\tau,
\]
where $\mathrm{d}\mu$ denotes the $(d-1)$-dimensional Lebesgue measure.
We define  anisotropic norms $L(p,2; \Omega_i)$, $1\leq p \leq \infty$, 
by
\begin{align*}
\lVert v \rVert_{L(p,2; \Omega_i)}^p &\coloneq \int_{\tau = 0}^D \left( \int_{\gamma_{\tau} } v^2 \mathrm{d}\mu \right)^{p/2} ~\mathrm{d}\tau,\quad 1\leq p < \infty, \\
\lVert v \rVert_{L(\infty,2; \Omega_i)} &\coloneq \sup_{\tau \in(0, D)} \left( \int_{\gamma_{\tau} } v^2 \mathrm{d}\mu \right)^{1/2}.
\end{align*}
Adding the components of each patch, we define anisotropic norms on $\Omega$:
\begin{align*}
\lVert v \rVert_{L(p,2)}^p &\coloneq \sum_{i=1}^{N_\gamma} \lVert v \rVert_{L(p,2; \Omega_i)}^p ,\quad 1\leq p < \infty, \\
\lVert v \rVert_{L(\infty,2)} &\coloneq \max_{i=1,\ldots, N_\gamma} \lVert v \rVert_{L(\infty,2; \Omega_i)}.
\end{align*}
 Note that the patches cover $\Omega$ without any overlap and the $L(2,2)$ norm coincides with the $L^2(\Omega)$ norm.
 
The H\"older inequality $\int_{\Omega} f g~\mathrm{d}x \leq \|f\|_{L(p,2)} \|g\|_{L(q,2)}$ for $1/p + 1/q = 1$ follows from the one-dimensional H\"older inequality. Furthermore an interpolation result analogue to $L^p$ spaces is valid.
\begin{lemma} \label{lem:interpolation_anisotropic}
For $1<p<\infty$
and $1/p + 1/{p'} = 1$, it holds
\[
L(p,2) = (L(1,2), L(\infty,2) )_{{1/p'}, p}.
\]
\end{lemma}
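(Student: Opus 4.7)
The plan is to reduce the claim to the classical Lions--Peetre real interpolation theorem for Banach-space-valued Lebesgue spaces, which asserts that for any Banach space $X$ and $1 < p < \infty$,
\[ (L^1((0,D);X),\, L^\infty((0,D);X))_{1/p',\, p} = L^p((0,D);X), \]
see, for instance, Bergh and L\"ofstr\"om, \emph{Interpolation Spaces}, Theorem 5.1.2. The overall strategy is to decompose by patches, identify each patch-space with a Bochner $L^p$ space, invoke the cited theorem, and descend through a norm-one projection.

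First I would observe that by construction the anisotropic norm is the $\ell^p$-direct sum of the patch norms, $\|v\|_{L(p,2)}^p = \sum_{i=1}^{N_\gamma} \|v\|_{L(p,2;\Omega_i)}^p$. Since real interpolation commutes with finite $\ell^p$-direct sums when the interpolation exponent matches the sum exponent, it suffices to establish the claim on a single patch $\Omega_i$. On such a patch, I would identify $L(p,2;\Omega_i)$ isometrically with the closed subspace of the Bochner space $L^p((0,D); L^2(\R^{d-1}))$ consisting of those maps $\tau \mapsto v(\cdot,\tau)$ whose value at $\tau$ is supported in the cross-section $\gamma_\tau \subset \R^{d-1}$ (extended by zero). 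This subspace is the range of a projection of norm one on the ambient Bochner space, and the projection is independent of $p \in [1,\infty]$.

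Applying the vector-valued real interpolation theorem with $X = L^2(\R^{d-1})$ and $\theta = 1/p'$ gives the identity on the ambient Bochner space; since the projection is uniformly bounded in $p$, the identity descends to the patch-supported subspace. Summing over the patches yields the desired equality $L(p,2) = (L(1,2), L(\infty,2))_{1/p',\, p}$. The main technical point I anticipate is precisely this reduction to a Bochner space, which is slightly subtle because the cross-section $\gamma_\tau$ shrinks to a point as $\tau$ approaches the barycentre; the zero-extension described above handles this cleanly, with $\gamma_\tau$ viewed as a measurable cross-section of the polyhedral patch $\Omega_i$. If one prefers to bypass the vector-valued interpolation machinery altogether, an alternative is to compute the $K$-functional for the couple $(L(1,2), L(\infty,2))$ slicewise, reducing to the scalar identity $K(t,f; L^1, L^\infty) \sim \int_0^t f^{*}(s)\,\mathrm{d}s$ applied to the function $\tau \mapsto \|v(\cdot,\tau)\|_{L^2(\gamma_\tau)}$.
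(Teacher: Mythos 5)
Your proposal is correct, and its main route is genuinely different in packaging from the paper's, although the two ultimately rest on the same mechanism. The paper argues by hand on each patch: it introduces the scalar slice function $f_v(\tau) = \|v\|_{L^2(\gamma_\tau)}$ on $I=(0,D)$ and proves the exact identity $K(t,v;L(1,2;\Omega_i),L(\infty,2;\Omega_i)) = K(t,f_v;L^1(I),L^\infty(I))$ --- the nontrivial direction uses the pointwise rescaling $v_j(x',\tau) = v(x',\tau)f_j(\tau)/f_v(\tau)$ --- and then invokes only the scalar identity $L^p(I)=(L^1(I),L^\infty(I))_{1/p',p}$; this is exactly the ``slicewise $K$-functional'' alternative you sketch in your final sentence. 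Your primary route instead realizes each patch space isometrically as the range of the multiplication operator $v\mapsto \chi_{\Omega_i}v$, a projection of norm one on $L^q\bigl((0,D);L^2(\mathbb{R}^{d-1})\bigr)$ simultaneously for all $q$, and descends the vector-valued Lions--Peetre identity through this retract; the patch-summation step, which you make explicit ($\ell^p$-sums commute with real interpolation at matching exponent), is left implicit in the paper's closing sentence. What your route buys is brevity and a clean separation of measure-theoretic bookkeeping from interpolation theory; what it costs is reliance on the vector-valued theorem at an $L^\infty$ endpoint, and there you should cite carefully: the identity $(L^{p_0}(A_0),L^{p_1}(A_1))_{\theta,p}=L^p\bigl((A_0,A_1)_{\theta,p}\bigr)$ is commonly stated for $p_0,p_1<\infty$ (e.g., Triebel, \emph{Interpolation Theory, Function Spaces, Differential Operators}, Section 1.18.4) and needs care at an $L^\infty$ endpoint when $A_0\neq A_1$. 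It does hold in your setting because both endpoints carry the same space $X=L^2(\mathbb{R}^{d-1})$, but the standard proof of that same-space case is precisely the pointwise-scaling $K$-functional computation the paper writes out --- so your argument is sound, just not as self-contained as the paper's.
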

\begin{proof}
For convenience of the reader, we sketch the main steps.  
Consider any patch $\Omega_i$, $i \in \{1,\ldots,N_\gamma\}$.
For any $1\leq q \leq \infty$ and $v \in L(q,2;\Omega_i), I=(0,D)$, consider $f_v \in L^q(I)$ which is defined for almost every $\tau \in I$ by $f_v(\tau) \coloneq \| v \|_{L^2(\gamma_{\tau})}$. It holds $\| v\|_{L(q,2;\Omega_i)} = \| f_v \|_{L^q(I)}$, and we can show the equality of the two $K$-functionals
\begin{align*}
K(t,v;L(1,2;\Omega_i), L(\infty,2;\Omega_i) )&= \inf_{v = v_0 + v_1}( \|v_0 \|_{L(1,2;\Omega_i)} + t\| v_1 \|_{L(\infty,2;\Omega_i)} ),\\
K(t, f_v; L^1(I), L^\infty(I)) &= \inf_{f_v = f_0 + f_1}( \|f_{0} \|_{L^1(I)}+ t\| f_{1} \|_{L^\infty(I)} ),
\end{align*}
and use the standard $L^p$-interpolation $L^p(I) = (L^1(I), L^\infty(I))_{1/p', p}$.

On the one hand, any decomposition $f_v = f_0 + f_1$  directly implies a decomposition by $v_i(x', \tau) \coloneq v(x',\tau) f_i(\tau) / f_v(\tau)$ for $x'\in \R^{d-1}$. The case $f_v(\tau) = 0$ is trivial and can be excluded. It holds $v = v_0 + v_1$ and $f_{v_i} = f_i$. As a consequence
\[
K(t,v;L(1,2;\Omega_i), L(\infty,2;\Omega_i) ) \leq K(t, f_v; L^1(I), L^\infty(I)).
\]
 On the other hand for any decomposition $v = v_0 + v_1$ it holds
\[
f_{v_0}(\tau) + f_{v_1}(\tau) =  \| v_0 \|_{L^2(\gamma_\tau)} +  \| v_1 \|_{L^2(\gamma_\tau)} \geq  \| v_0 + v_1 \|_{L^2(\gamma_\tau)}  = f_v(\tau).\]
Hence, the decomposition of $f_v$ by 
\[f_i(\tau) \coloneq f_{v_i}(\tau) ~ \frac{f_v(\tau)}{f_{v_1}(\tau) + f_{v_2}(\tau) } \leq f_{v_i}(\tau)\]
yields  $\|f_0 \|_{L^1(I)}\leq  \|v_0 \|_{L(1,2)}$ as well as $\| f_1 \|_{L^\infty(\Omega)} \leq \| v_1 \|_{L(\infty,2)}$. This implies 
\[
K(t,v;L(1,2;\Omega_i), L(\infty,2;\Omega_i) ) \geq K(t, f_v; L^1(I), L^\infty(I))
\]
and concludes the equality of both $K$-functionals.

Since the patches cover $\Omega$ without any overlap, the interpolation property for $L(p,2)$ follows. 
\end{proof}

As a preliminary to our analysis, we state approximation results of a Scott--Zhang type quasi-interpolation operator in the anisotropic norms.   
We consider $P_h\colon V \rightarrow V_h$ as in~\cite{scott:90}, based on the biorthogonal basis on $\Gamma_S$, preserving the homogeneous Dirichlet data on $\Gamma_D$. The boundary values are preserved such that $\left. P_h v \right|_{\Gamma_D} = 0$ and  $\langle P_h v , \mu_h \rangle_{\Gamma_S} = \langle v, \mu_h \rangle_{\Gamma_S}$ for $\mu_h \in M_h$.
On $\Gamma_S$, optimal order $L^2$ approximation properties 
\begin{equation}\label{sz:approximation_a_eq}
\| v - P_h v \|_{L^2(\Gamma_S)} \leq c h^{2-\varepsilon} \left| v \right|_{H^{2-\varepsilon}(\Gamma_S)}
\end{equation}
for $v \in V\cap H^{5/2-\varepsilon}(\Omega)$ are given.  
An approximation result in the $L(q, 2)$ norm is given by the following lemma.
\begin{lemma}
\label{sz:approximation_b}
For $v \in V\cap H^{5/2-\varepsilon}(\Omega)$, $ q = \varepsilon^{-1} \geq 2$, it holds
\begin{equation*}
\| \nabla\left( v -  P_h v \right) \|_{ L(q, 2) } \leq c h \| v\|_{ H^{5/2-\varepsilon}(\Omega) }.
\end{equation*}
\end{lemma}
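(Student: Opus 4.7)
The plan is to prove the estimate by interpolating between the anisotropic endpoints $L(1,2)$ and $L(\infty,2)$, using the real-interpolation identity of Lemma~\ref{lem:interpolation_anisotropic}. Specialising that lemma to $p = q = \varepsilon^{-1}$ and invoking the standard log-convexity inequality for real interpolation, I obtain
\[
\|\nabla(v - P_h v)\|_{L(q,2)} \;\le\; c\, \|\nabla(v - P_h v)\|_{L(1,2)}^{\varepsilon}\, \|\nabla(v - P_h v)\|_{L(\infty,2)}^{1-\varepsilon},
\]
so it suffices to bound each endpoint by $c h\,\|v\|_{H^{5/2-\varepsilon}(\Omega)}$.

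For the $L(1,2)$-endpoint I would apply Cauchy--Schwarz in the transverse variable on each patch, which gives $\|f\|_{L(1,2;\Omega_i)} \le D^{1/2}\|f\|_{L^2(\Omega_i)}$ and hence $\|\cdot\|_{L(1,2)} \le c\,\|\cdot\|_{L^2(\Omega)}$. The standard Scott--Zhang $H^1$-estimate together with the continuous embedding $H^{5/2-\varepsilon}(\Omega) \hookrightarrow H^2(\Omega)$ then yields $\|\nabla(v - P_h v)\|_{L^2(\Omega)} \le c h\, |v|_{H^2(\Omega)} \le c h\,\|v\|_{H^{5/2-\varepsilon}(\Omega)}$, which handles this endpoint.

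For the $L(\infty,2)$-endpoint I would proceed element by element using a slice-wise trace inequality
\[
\|\nabla(v - P_h v)\|^2_{L^2(\gamma_\tau \cap T)} \;\le\; c\,\bigl(h_T^{-1}\|\nabla(v - P_h v)\|^2_{L^2(T)} + h_T\,\|\partial_\tau \nabla v\|^2_{L^2(T)}\bigr),
\]
exploiting that $\nabla P_h v$ is constant inside every element $T$, so that $\partial_\tau \nabla P_h v \equiv 0$ and all higher-order seminorms of $\nabla P_h v$ vanish locally. Summing over the elements meeting $\gamma_\tau$, taking the supremum over $\tau$, and combining the Scott--Zhang $H^1$-bound with a fractional refinement of the trace inequality tailored to the $H^{5/2-\varepsilon}$-regularity of $v$, should yield the endpoint bound of order $h$, following the line of argument in~\cite{wohlmuth:12} (adapted from the Besov scale $B^{5/2}_{2,1}$ to the Hilbert scale $H^{5/2-\varepsilon}$).

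The main obstacle is this sharp $L(\infty,2)$-bound: a naive use of the trace inequality with only the $H^2$-seminorm delivers the suboptimal rate $h^{1/2}$, and only the fractional refinement exploiting the complete $H^{5/2-\varepsilon}$-regularity recovers the full order $h$. The specific choice $q = \varepsilon^{-1}$ in the statement is precisely what makes the weighted geometric mean of the two endpoint bounds close to the claimed order.
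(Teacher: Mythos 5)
Your overall strategy --- reducing the $L(q,2)$ bound to the two anisotropic endpoints via Lemma~\ref{lem:interpolation_anisotropic} --- is the right starting point, and your $L(1,2)$ endpoint is fine. The genuine gap is the $L(\infty,2)$ endpoint, and it is not merely a missing detail: the bound $\|\nabla(v-P_hv)\|_{L(\infty,2)} \leq c h \|v\|_{H^{5/2-\varepsilon}(\Omega)}$ that your log-convexity argument requires is false under $H^{5/2-\varepsilon}$ regularity. Your slice-wise inequality reduces the question, after summing over the elements meeting $\gamma_\tau$, to controlling $h\,|v|^2_{H^2(S_\tau)}$ uniformly in $\tau$, where $S_\tau$ is a strip of width $\mathcal{O}(h)$ around $\gamma_\tau$; on smooth functions the quantity $\sup_\tau\|\nabla(v-P_hv)\|_{L^2(\gamma_\tau)}$ scales like $h\sup_\tau\|\nabla^2 v\|_{L^2(\gamma_\tau)}$, i.e.\ it demands an $L^2$ trace of $\nabla^2 v$ on every slice. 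Since $\nabla^2 v$ only lies in $H^{1/2-\varepsilon}(\Omega)$, such traces do not exist for $\varepsilon>0$ (and fail logarithmically even at $\varepsilon=0$); the endpoint trace embedding requires the Besov space $B^{1/2}_{2,1}$, i.e.\ $v\in B^{5/2}_{2,1}(\Omega)$. What is achievable with $H^{5/2-\varepsilon}$ regularity is only $|v|_{H^2(S_\tau)}\leq c h^{1/2-\varepsilon}\|v\|_{H^{5/2-\varepsilon}(\Omega)}$, hence $\|\nabla(v-P_hv)\|_{L(\infty,2)}\leq c h^{1-\varepsilon}\|v\|_{H^{5/2-\varepsilon}(\Omega)}$, and inserting this into your geometric mean gives $h^{\varepsilon}\cdot h^{(1-\varepsilon)^2}=h^{1-\varepsilon+\varepsilon^2}$, strictly short of the claimed order $h$. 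So no ``fractional refinement of the trace inequality'' can rescue the function-wise interpolation; the obstacle you flag at the end of your proposal is exactly where the argument breaks.

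The paper's proof circumvents this by interpolating the operator rather than the function. One proves the endpoint bound with the stronger norm on the right-hand side, $\|\nabla(v-P_hv)\|_{L(\infty,2)}\leq ch\|v\|_{B^{5/2}_{2,1}(\Omega)}$ (this is where the strip estimate of \cite[Lemma~2.1]{wohlmuth:09} enters), together with the $L(2,2)=L^2(\Omega)$ bound $\leq ch|v|_{H^2(\Omega)}$, and then applies the interpolation property of the linear map $v\mapsto\nabla(v-P_hv)$ between the couples $(H^2(\Omega),B^{5/2}_{2,1}(\Omega))$ and $(L(2,2),L(\infty,2))$ with parameters $(1-2\varepsilon,q)$. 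By reiteration and Lemma~\ref{lem:interpolation_anisotropic} one has $(L(2,2),L(\infty,2))_{1-2\varepsilon,q}=L(q,2)$, while $H^{5/2-\varepsilon}(\Omega)=(H^2(\Omega),B^{5/2}_{2,1}(\Omega))_{1-2\varepsilon,2}\subset(H^2(\Omega),B^{5/2}_{2,1}(\Omega))_{1-2\varepsilon,q}$, so the uniform constant $ch$ carries over to $H^{5/2-\varepsilon}$ data with no loss. This is the step your proposal is missing: operator interpolation lets the Besov regularity be demanded only at the endpoint of the interpolation scale, never of the actual function $v$.
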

\begin{proof}
Since the $L(2,2)$ norm coincides with the $L^2(\Omega)$ norm, we have the standard approximation result
\begin{equation*}
\| \nabla\left( v -  P_h v \right) \|_{ L(2, 2) } \leq c h \left| v\right|_{ H^2(\Omega) }.
\end{equation*}
For $q > 2$, we show the estimate by an interpolation argument, using the $L(2,2)$ and the $L(\infty, 2)$ estimate.
For the $L(\infty, 2)$ norm, we can easily adapt the proof in~\cite[Lemma 4.1]{wohlmuth:12} using local approximation results of the Scott--Zhang operator~\cite[Equation 4.3]{scott:90}. 
For any patch $\Omega_i$, $i\in \{1,\ldots, N_\gamma\}$ and $\tau > 0$, we first define two strips around $\gamma_\tau$. A strip of width $2\delta$ is defined by
$\strip_i(\delta,  \tau) \coloneq \{x\in \Omega: \dist(x, \gamma_{ \tau}) \leq \delta \}$
and a discrete neighbourhood can be constructed by the elements intersecting $\gamma_\tau$: $\mathcal{I}_\tau \coloneq \{T\in \mathcal{T}_h: \gamma_\tau \cap \overline{T}\neq \emptyset\}$. 
Note, that we cannot expect $\strip_i(\delta, \tau) \subset\Omega_i$, but this inclusion is not necessary for our analysis.
Using these strips, local estimates of the Scott--Zhang operator yield
\begin{align*}
\|\nabla( v - P_h v )\|_{L^2(\gamma_{\tau})}^2 &\leq c \sum_{T\in \mathcal{I}_\tau} \left( \frac1h \|\nabla( v - P_h v )\|_{L^2( T )}^2 + h  \| \nabla^2 ( v - P_hv ) \|_{L^2(T)}^2 \right) \notag \\
&\leq c h \left| v \right|^2_{H^2(\strip(\tilde c h,\tau))}
\leq c h^2 \|v\|^2_{ B_{2,1}^{5/2}(\Omega) },
\end{align*}
where in the last step~\cite[Lemma 2.1]{wohlmuth:09} was used. Consequently, we have
\begin{equation*}
\| \nabla\left( v -  P_h v \right) \|_{ L(\infty, 2) } \leq c h \| v\|_{ B_{2,1}^{5/2}(\Omega) }.
\end{equation*}

To show this estimate also for interpolation spaces, we apply the interpolation property~\cite[Lemma 22.3]{tartar:07}. 
By the reiteration theorem and Lemma~\ref{lem:interpolation_anisotropic}, we have the interpolation representations $L(q,2) = (L(2,2), L(\infty,2) )_{1-2\varepsilon, q}$ and the similar term $H^{\frac52 - \varepsilon}(\Omega)= (H^2(\Omega), B_{2,1}^{5/2}(\Omega) )_{1-2\varepsilon, 2} \subset (H^2(\Omega), B_{2,1}^{5/2}(\Omega) )_{1-2\varepsilon, q}$. As a consequence the stated estimate is also valid in the interpolated spaces.
\end{proof}

\subsection{Dual problems}
In this subsection, we follow the lines of~\cite[Section 5]{wohlmuth:12} and define a dual Dirichlet problem with locally supported data.
For $v\in L^2(\Omega)$, $\supp v \subset \overline{ \strip(h) }$, we denote by $T^Dv$ the solution operator of
\begin{equation} \label{dual_dirichlet}
	-\Delta w = v \quad \text{ in } \Omega, \qquad w = 0 \quad \text{ on } \Gamma,
\end{equation} 
i.e., $T^Dv = w$.

In contrast to~\cite{wohlmuth:12}, we cannot assume $B^{5/2}_{2,1}(\Omega)$ regularity for the solution of~\eqref{vi_cont}, but only $H^{5/2-\varepsilon}(\Omega)$ regularity. Naive interpolation of the final estimate does not yield optimal results but an additional log-term. 
For optimal results, we need the stronger estimate given in the following lemmas. In the next lemma, we state a regularity estimate in a weighted Sobolev space using the local support of the data of the dual problem. Based on this estimate, we then state an approximation result for the Galerkin approximation of the dual solution in an anisotropic norm.

\begin{lemma} \label{lem:dual_regularity}
For $v \in L^2(\Omega),~\supp v \subset \overline{ \strip(h) }$ and $w \coloneq T^Dv$ there exists $\tilde c$ independent of $v$ and $h$, such that
\[
\| \delta_\Gamma^{1/2 - \varepsilon/2} \nabla^2 w\|_{L^2(\Omega \backslash \strip(\tilde c h) )} \leq c h^{1/2-\varepsilon/2} \|v\|_{L^2(\Omega)},
\]
where $\delta_\Gamma$ is the distance function to $\Gamma$.
\end{lemma}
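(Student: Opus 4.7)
The plan is to exploit the fact that $w$ is harmonic on $\Omega\setminus\supp v\supset\Omega\setminus\strip(h)$, combined with a dyadic decomposition of the interior region in the distance to $\Gamma$. Partition $\Omega\setminus\strip(\tilde c h)$ into dyadic shells
\[
A_k\coloneq\{x\in\Omega:\tilde c\cdot 2^k h\le\delta_\Gamma(x)<\tilde c\cdot 2^{k+1}h\},\qquad k=0,1,\ldots,K,
\]
where $2^K h$ is of the order of the diameter of $\Omega$. By choosing $\tilde c$ large enough, a slightly enlarged thickening $\tilde A_k$ of each shell stays disjoint from $\supp v$, so $w$ is harmonic on $\tilde A_k$. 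Covering $A_k$ by interior balls of radius $\sim 2^k h$ contained in $\tilde A_k$ and applying the standard interior $H^2$-regularity (Caccioppoli) estimate for harmonic functions then gives
\[
\|\nabla^2 w\|_{L^2(A_k)}\le c(2^k h)^{-2}\|w\|_{L^2(\tilde A_k)}.
\]

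The technical heart of the proof is to bound $\|w\|_{L^2(\tilde A_k)}$ with enough decay in $k$ to beat the weight $(2^k h)^{1-\varepsilon}$ coming from $\delta_\Gamma^{1-\varepsilon}$ on $A_k$. Since $\supp v\subset\overline{\strip(h)}$ and $\delta_\Gamma(x)\gtrsim 2^k h\gg h\ge\delta_\Gamma(y)$ for $x\in\tilde A_k$ and $y\in\supp v$, I would invoke a Green function bound of the form $|G(x,y)|\le c\,\delta_\Gamma(x)\delta_\Gamma(y)|x-y|^{-d}$, which is valid in this regime on the convex polyhedron $\Omega$, and apply Cauchy--Schwarz to the representation $w(x)=\int G(x,y)v(y)\,\mathrm{d}y$ together with an explicit integration in the half-space model to obtain a bound of the form
\[
\|w\|_{L^2(\tilde A_k)}^2\le c\,h^{\alpha}2^{-\beta k}\|v\|_{L^2(\Omega)}^2,
\]
with $\beta>0$ large enough.

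Putting these ingredients together and using $\delta_\Gamma^{1-\varepsilon}\sim(2^k h)^{1-\varepsilon}$ on $A_k$,
\[
\int_{\Omega\setminus\strip(\tilde c h)}\delta_\Gamma^{1-\varepsilon}|\nabla^2 w|^2\le c\sum_{k=0}^{K}(2^k h)^{-3-\varepsilon}\|w\|_{L^2(\tilde A_k)}^2,
\]
the series is dominated by a convergent geometric sum whose total is of the order $h^{1-\varepsilon}\|v\|_{L^2(\Omega)}^2$, which is the asserted estimate.

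The main obstacle is the third step: extracting the correct powers of $h$ and the geometric decay in $k$ for $\|w\|_{L^2(\tilde A_k)}$. On a convex polyhedron the Grüter--Widman-type Green function bound holds uniformly away from the vertices, but the regions near edges and corners require more delicate analysis. As a safer alternative I would pursue a weighted energy approach in parallel: testing $-\Delta w=v$ against suitably weighted second derivatives of $w$ multiplied by $\delta_\Gamma^{1-\varepsilon}$, and combining a Hardy inequality with $\supp v\subset\strip(h)$ and the boundary condition $w=0$ on $\Gamma$, should produce the same estimate without any pointwise Green function analysis.
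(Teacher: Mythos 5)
Your plan is a genuinely different route from the paper's proof. The paper dilates $\Omega$ by a factor $1+4C_1h$ so that the dilated solution $\widetilde w$ is harmonic near the boundary region, applies interior regularity on a Besicovitch cover with radii $\sim h+\delta_\Gamma$ to get $\| (\delta_\Gamma+h)^{1/2-\varepsilon/2}\nabla^2\widetilde w\|_{L^2(\Omega\backslash\strip(h))}\leq c\|\widetilde w\|_{H^{3/2+\varepsilon/2}(\Omega)}$, and then controls $\|\widetilde w\|_{H^{3/2+\varepsilon/2}(\Omega)}\leq ch^{-\varepsilon/2}\|w\|_{B^{3/2}_{2,\infty}(\Omega)}$ by a $K$-functional argument, finishing with the cited bound $\|w\|_{B^{3/2}_{2,\infty}(\Omega)}\leq ch^{1/2}\|v\|_{L^2(\Omega)}$, which is where the local support of $v$ enters. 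Your dyadic-shell-plus-Green-function scheme tries to produce that same gain directly from the kernel, and it can in fact be completed; your first two steps (harmonicity of $w$ off the strip and the interior estimate $\|\nabla^2 w\|_{L^2(A_k)}\leq c(2^kh)^{-2}\|w\|_{L^2(\tilde A_k)}$) are sound.

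The gap is the third step, exactly the one you defer, and your specification of it is wrong. Write $R=2^kh$. Since $w$ vanishes on $\Gamma$, its $L^2$ mass grows away from the boundary, so no bound of the form $\|w\|^2_{L^2(\tilde A_k)}\leq ch^\alpha 2^{-\beta k}\|v\|^2_{L^2(\Omega)}$ with $\beta>0$ is available. What is true, by a Schur test using the two one-sided kernel bounds $\sup_{x\in\tilde A_k}\int_{\strip(h)}|G(x,y)|\,\mathrm{d}y\leq ch^2$ and $\sup_{y\in\strip(h)}\int_{\tilde A_k}|G(x,y)|\,\mathrm{d}x\leq cRh$, is
\[
\|w\|^2_{L^2(\tilde A_k)}\leq c\,h^3R\,\|v\|^2_{L^2(\Omega)}=c\,h^4 2^k\,\|v\|^2_{L^2(\Omega)},
\]
a bound that \emph{grows} in $k$; the sum nevertheless closes only because the weight supplies the decay, $\sum_k(2^kh)^{-3-\varepsilon}h^42^k=h^{1-\varepsilon}\sum_k2^{-(2+\varepsilon)k}\leq ch^{1-\varepsilon}$. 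By contrast, the execution you sketch — pointwise Cauchy--Schwarz $|w(x)|\leq\|G(x,\cdot)\|_{L^2(\strip(h))}\|v\|_{L^2(\Omega)}$ followed by multiplying by $|\tilde A_k|$ — yields only $\|w\|^2_{L^2(\tilde A_k)}\leq ch^3R^{2-d}\|v\|^2_{L^2(\Omega)}$, which misses the required total by a factor $h$ for $d=2$ and $h^2$ for $d=3$: the $y$-integration must be kept inside (the bilinear form), otherwise the argument already fails at $k=0$. Two side remarks: the Green function bound is not the weak point you fear — on a convex domain it follows, with no edge or corner analysis, from domain monotonicity with respect to the supporting half-space at the nearest boundary point (giving $G(x,z)\leq c\,\delta_\Gamma(x)|x-z|^{1-d}$) combined with one barrier step in the $y$ variable; and your fallback weighted-energy/Hardy argument is far from routine, since testing with weighted second derivatives on a polyhedron creates commutator and boundary terms with no sign precisely along the edges, which is where $\nabla^2 w$ is problematic. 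In short: viable skeleton, but the one step that actually produces the factor $h^{1/2-\varepsilon/2}$ is missing, and the form you predict for it cannot hold.
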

\begin{proof}We follow the idea of~\cite[Lemma 5.4]{wohlmuth:12}, but instead of several local translations of $w$, we consider a global scaling of the coordinate system. 
To exploit the local data of the dual problem, we choose a sufficiently large scale factor such that the transformation of $w$ is harmonic in a neighbourhood of $\Omega$.  This allows us to apply interior regularity results for the transformation of $w$, see \cite[Theorem 8.8]{gilbarg:01}:
\begin{equation} \label{eq:interior_regularity}
\| \nabla^2 z \|_{L^2(B_1)} \leq c \| z \|_{H^1(B_{1+\rho})},
\end{equation}
for $-\Delta z = 0$ on $B_{1+\rho}$, a ball of radius $1+\rho$ for a fixed $\rho>0$.

Without loss of generality, assume that the barycentre of $\Omega$ is the origin of the coordinate system. 
For  sufficiently small $h$, we define a neighbourhood of $\Omega$ by a scaling $\widetilde \Omega \coloneq \{ ( 1 + 4 C_1 h)x: x \in \Omega \}$. 
Since we estimate $w$ only on $\Omega \backslash \strip(\tilde c h)$, where $\tilde c$ is selected later, we can choose the scale factor appropriately. 
 The constant $C_1$ is sufficiently large, but fixed and independent of $h$, such that for $x \in \strip(h)$ it holds $  (1+2 C_1 h) x  \not \in \Omega$.
We scale $w$ to a function on this neighbourhood by $\widetilde w: \widetilde \Omega \rightarrow \R$, $\widetilde w( x ) \coloneq w(x / (1 + 4 C_1 h  ) ) $. 

Note that the introduced scaling preserves harmonic functions, more precisely for $x\in \Omega$ and $h < 1/(2C_1)$, we have $(1+C_1h)/(1+4C_1h) x \in \Omega\backslash \strip(h)$, and thus 
\begin{equation*} 
 \Delta \widetilde w = 0 \text{ at } (1+C_1 h) x, \quad x\in \Omega.
\end{equation*}
Since the scale factor is uniformly bounded, it also preserves Sobolev norms, i.e., 
\[c \|\widetilde w \|_{H^\sigma(\widetilde \Omega)} \leq  \| w \|_{H^\sigma(\Omega)}  \leq C  \|\widetilde w \|_{H^\sigma(\widetilde \Omega)} , \sigma \in \{0, 3/2\}.\]

To apply the transformation $\widetilde w$, we choose $\tilde c$ sufficiently large such that the transformation of $\Omega \backslash \strip(\tilde c h)$ is a subset of $\Omega\backslash \strip(h)$ and thus
\[
\| \delta_\Gamma^{1/2 - \varepsilon/2} \nabla^2 w \|_{L^2\left(\Omega \backslash \strip(\tilde c h)\right) }
\leq c
\| (\delta_\Gamma + h)^{1/2 - \varepsilon/2} \nabla^2 \widetilde w \|_{L^2(\Omega \backslash \strip(h) ) }, \quad \text{ for } x\in \Omega.
\]
Standard interior regularity~\eqref{eq:interior_regularity} yields, for a fixed $\rho>0$  and  any concentric balls of radius $r$ and $r(1+\rho)$, such that $B_{r(1+\rho)} \subset \Omega$, the estimate
 $\| \nabla^2 \widetilde w \|_{L^2(B_r) } \leq c r^{-1/2+\varepsilon/2} \| \widetilde w \|_{H^{3/2+\varepsilon/2}(B_{r(1+\rho)})}$ . A covering of $\Omega\backslash \strip(h)$ using balls of center $x_i$ and radii $r_i \sim h + \delta_\Gamma(x_i)$ shows
\[\| (\delta_\Gamma + h)^{1/2 - \varepsilon/2} \nabla^2 \widetilde w \|_{L^2(\Omega\backslash \strip(h)) } \leq c \| \widetilde w \|_{H^{3/2+\varepsilon/2}(\Omega)}. 
\]
Details on the Besicovitch covering theorem can be found in~\cite[Section 1.5.2]{evans:92} and~\cite[Chapter 5]{melenk:02}.

An analogue computation as in~\cite[Lemma 5.4]{wohlmuth:12}, where the case $\varepsilon = 0$ was considered, concludes the proof .
We bound the $K$-functional of the fractional Sobolev space $(H^1(\Omega), H^2(\Omega) )_{1/2+\varepsilon/2, 2} = H^{3/2+\varepsilon/2}(\Omega)$ by
\begin{align}\label{eq:K_Funk_Part1}
&\|\widetilde w \|_{H^{3/2+\varepsilon/2}(\Omega)}^2 = 
\int_{t=0}^{h} \left(t^{-1/2 - \varepsilon/2}K(t,\widetilde w) \right)^2 \frac{\mathrm{d}t}{t}  + \int_{t=h}^1 \left(t^{-1/2 - \varepsilon/2}K(t, \widetilde w) \right)^2 \frac{\mathrm{d}t}{t}\notag \\
&\quad\leq \int_{t=0}^{h} \left(t^{-1/2 - \varepsilon/2}K(t, \widetilde w) \right)^2 \frac{\mathrm{d}t}{t} +  \int_{t=h}^1 t^{-1-\varepsilon}  ~{\mathrm{d}t} ~ \sup_{t>0}\left(t^{-1/2} K(t, \widetilde w) \right)^2.
\end{align}
Again applying the interior regularity~\eqref{eq:interior_regularity}, we get $\| \widetilde w \|_{H^{2}(\Omega)} \leq c h^{-1/2} \|  w \|_{H^{3/2}(\Omega)}$ which yields $K(t, \widetilde w) \leq c t \|\widetilde w\|_{H^2(\Omega)} \leq c t h^{-1/2} \|  w \|_{H^{3/2}(\Omega)}$. Substituting this upper bound in the first integral of~\eqref{eq:K_Funk_Part1} and observing  $ \sup_{t>0}\left(t^{-1/2} K(t, \widetilde w) \right) \leq \|  w \|_{B_{2,\infty}^{3/2}(\Omega)}$, yields
\begin{align*}
\|\widetilde w \|_{H^{3/2+\varepsilon/2}(\Omega)}  &\leq c h^{-\varepsilon/2} \|  w \|_{B_{2,\infty}^{3/2}(\Omega)}.
\end{align*}

 Finally~\cite[Lemma 5.2]{wohlmuth:12} states $\|  w \|_{B_{2,\infty}^{3/2}(\Omega)} \leq c h^{1/2} \|v\|_{L^2(\Omega)}$ which concludes the proof. 
\end{proof}

Using local error estimates and the weighted regularity result proven above, we  show an approximation result for the Galerkin approximation of the dual problem in anisotropic norms.
\begin{lemma}\label{approximation_dirichlet_pb}
Given $v \in L^2(\Omega)$ with $\supp v \subset \overline{ \strip(h) }$,  consider $w = T^D v$ and the Galerkin approximation $w_h \in V_h\cap H_0^1(\Omega)$. 
 For $1<p = (1-\varepsilon)^{-1} \leq 2 $, the following approximation property holds:
\[
\| \nabla ( w - w_h) \|_{L(p,2)} \leq c h^{3/2-\varepsilon} \|v \|_{L^2(\Omega)}.
\]
\end{lemma}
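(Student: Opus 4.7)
My plan is to decompose $\Omega$ into the boundary strip $\Omega_1 := \strip(\tilde c h)$ and the interior $\Omega_2 := \Omega \setminus \Omega_1$ with $\tilde c$ from Lemma~\ref{lem:dual_regularity}, and to estimate $\|\nabla(w-w_h)\|_{L(p,2)}$ separately on each part. On the strip $\Omega_1$, whose thickness in the normal direction $\tau$ is of order $h$, H\"older's inequality in $\tau$ (using $p\leq 2$) converts the $L(p,2;\Omega_1)$-norm into the $L^2(\Omega_1)$-norm at the price of a factor $h^{(2-p)/(2p)} = h^{1/2-\varepsilon}$. Combined with the global quasi-optimal $H^1$-Galerkin estimate $\|\nabla(w-w_h)\|_{L^2(\Omega)} \leq ch\|v\|_{L^2}$, which follows from the Besov regularity $\|w\|_{B^{3/2}_{2,\infty}(\Omega)} \leq ch^{1/2}\|v\|_{L^2}$ of \cite[Lemma~5.2]{wohlmuth:12} and standard Scott--Zhang approximation, the targeted rate $h^{3/2-\varepsilon}\|v\|_{L^2}$ is obtained on $\Omega_1$.

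On the interior $\Omega_2$, $w$ is harmonic by the support assumption on $v$, and Lemma~\ref{lem:dual_regularity} supplies $\|\delta_\Gamma^{1/2-\varepsilon/2}\nabla^2 w\|_{L^2(\Omega_2)} \leq ch^{1/2-\varepsilon/2}\|v\|_{L^2}$. I would split $w - w_h = (w - P_h w) + (P_h w - w_h)$ with the Scott--Zhang interpolant from Section~\ref{subsec:anisotropic} and bound the interpolation error layer by layer: the local Scott--Zhang estimate yields $\|\nabla(w - P_h w)\|_{L^2(\gamma_\tau)}^2 \leq ch\|\nabla^2 w\|_{L^2(\omega_\tau)}^2$ on a discrete neighbourhood $\omega_\tau$ of $\gamma_\tau$. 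Integrating in $\tau$, inserting the factorisation $1 = \tau^{-(1/2-\varepsilon/2)}\cdot \tau^{1/2-\varepsilon/2}$, and applying H\"older in $\tau$ with conjugate exponents $(2/p,\,2/(2-p))$ produces on one side the weighted $H^2$-quantity $\int \tau^{1-\varepsilon}\|\nabla^2 w\|_{L^2(\omega_\tau)}^2\,d\tau \leq c h\,\|\delta_\Gamma^{1/2-\varepsilon/2}\nabla^2 w\|_{L^2(\Omega_2)}^2 \leq ch^{2-\varepsilon}\|v\|_{L^2}^2$ and on the other the remainder integral $\int_{\tilde c h}^D \tau^{-(1-\varepsilon)/(1-2\varepsilon)}\,d\tau \sim h^{-\varepsilon/(1-2\varepsilon)}$. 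Using the identity $p(1-\varepsilon)=1$ the combined exponents telescope to exactly $h^{3/2-\varepsilon}\|v\|_{L^2}$.

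The discrete correction $\|\nabla(P_h w - w_h)\|_{L(p,2;\Omega_2)}$ I plan to control by a duality argument in the anisotropic norms: test against $\phi \in L(q,2)^d$ with $\|\phi\|_{L(q,2)}\leq 1$ and $1/p+1/q=1$, introduce an auxiliary Dirichlet problem, and apply Galerkin orthogonality together with the approximation result of Lemma~\ref{sz:approximation_b} in the dual norm $L(q,2)$, which reproduces an $h^{3/2-\varepsilon}$ bound. The main obstacle is the exponent arithmetic on $\Omega_2$: the choice $p = (1-\varepsilon)^{-1}$ is calibrated precisely so that the weighted-regularity index $1/2-\varepsilon/2$, the Scott--Zhang approximation order, and the divergent remainder integral align. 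Compared to \cite{wohlmuth:12}, where the stronger Besov regularity $B^{5/2}_{2,1}(\Omega)$ produced only a logarithmic loss, the reduction to $H^{5/2-\varepsilon}(\Omega)$ replaces that logarithm by the explicit polynomial factor $h^{-\varepsilon/(1-2\varepsilon)}$ appearing above, and careful bookkeeping is required to verify that it is exactly absorbed into the approximation terms.
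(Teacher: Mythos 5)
Your overall skeleton matches the paper's proof: the same split into a boundary strip of width $\mathcal{O}(h)$ and the interior, the same H\"older bookkeeping (the factor $h^{1/2-\varepsilon}$ on the strip, the divergent remainder integral $\sim h^{-\varepsilon/(1-2\varepsilon)}$ in the interior, whose combined exponents indeed telescope correctly), and the same use of Lemma~\ref{lem:dual_regularity} for the weighted interpolation error. The strip estimate is fine (although the detour through $B^{3/2}_{2,\infty}$ regularity is unnecessary: convexity gives $\|w\|_{H^2(\Omega)} \leq c\|v\|_{L^2(\Omega)}$ and hence $\|\nabla(w-w_h)\|_{L^2(\Omega)} \leq ch\|v\|_{L^2(\Omega)}$ directly), and so is your layer-by-layer treatment of $w - P_h w$. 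The genuine gap is the discrete correction $P_h w - w_h$ on the interior. Since $P_h w - w_h = (P_h w - w) + (w - w_h)$, bounding this term in $L(p,2)$ is equivalent to bounding the full Galerkin error there, i.e., it is exactly where the nonlocality (pollution) of the Ritz projection must be confronted; the duality argument you sketch cannot do this. Testing with $\phi \in L(q,2)^d$ leads to an auxiliary Dirichlet problem whose datum is only $\operatorname{div}\phi \in H^{-1}(\Omega)$, so its solution has no regularity beyond $H^1_0(\Omega)$, and Lemma~\ref{sz:approximation_b} --- which requires $H^{5/2-\varepsilon}(\Omega)$ regularity of the function being approximated --- is simply not applicable to it. As written, this step would fail.

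The paper avoids this trap by never splitting off an interpolant from the Galerkin error. After the H\"older step it bounds the weighted error $\| \tau^{1/2-\varepsilon/2}\nabla(w-w_h)\|_{L^2}$ away from the boundary by the Wahlbin-type local finite element estimate~\eqref{eq:result_wahlbin}: a weighted local interpolation term (handled, as in your plan, via Lemma~\ref{lem:dual_regularity}) \emph{plus} a weighted global pollution term $\|\tau^{-1/2-\varepsilon/2}(w-w_h)\|_{L^2}$. The pollution term is then absorbed using convexity of $\Omega$ through the Aubin--Nitsche bound $\|w-w_h\|_{L^2(\Omega)} \leq ch^2\|v\|_{L^2(\Omega)}$ together with the weight lower bound $\tau \geq \tilde c_2 h$, giving $h^{-1/2-\varepsilon/2}\cdot h^2 = h^{3/2-\varepsilon/2}$, which matches the prefactor $h^{-\varepsilon/2}$ exactly. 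This local-estimate-plus-pollution mechanism, and in particular the use of the $\mathcal{O}(h^2)$ $L^2$ error bound, is the ingredient missing from your proposal; substituting it for your duality treatment of $P_h w - w_h$ turns your outline into the paper's proof.
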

\mybeginproof
We show the estimate on each patch $\Omega_i$, $i \in \{ 1,\ldots, N_\gamma\}$. 
In the definition of the norm, we decompose the integral in $\tau$ from $0$ to $D$ into two parts and find 
\begin{align*}
&\| \nabla(w - w_h)\|_{L(p,2;\Omega_i)}^p \\&\qquad=
 \int_{\tau = 0}^{\tilde c_1h} \| \nabla( w - w_h) \|_{L^2(\gamma_\tau)}^p ~\mathrm{d}\tau +  \int_{\tau = \tilde c_1 h}^D \| \nabla( w - w_h) \|_{L^2(\gamma_\tau)}^p~ \mathrm{d}\tau ,\notag
\end{align*}
where $\tilde c_1$ has to be adapted to  the constant $\tilde c$ resulting from the previous lemma.

The first term is an integral over a strip of width $\mathcal{O}(h)$.
The H\"older inequality with the exponents $2/p$, $2/(2-p)$ and the Fubini--Tonelli formula obviously yield for $p=(1-\varepsilon)^{-1}$
\begin{align*}
&\int_{\tau = 0}^{\tilde c_1 h} \| \nabla( w - w_h) \|_{L^2(\gamma_\tau)}^p ~ \mathrm{d}\tau \leq c h^{(2 - p)/2} \left( \int_{\tau = 0}^{\tilde c_1 h}  \| \nabla( w - w_h) \|_{L^2(\gamma_\tau)}^2  ~\mathrm{d}\tau \right)^{p/2} \notag \\
&\qquad \leq c h^{p(1/2 -  \varepsilon)}  \|\nabla(w - w_h)\|_{L^2(S(\tilde c_1 h))}^p.
\end{align*}
Since $\Omega$ is convex, we have $\|\nabla(w - w_h)\|_{L^2(\strip(\tilde c_1 h))} \leq \|\nabla(w - w_h)\|_{L^2(\Omega)} \leq c h \|v\|_{L^2(\Omega)}$, which gives
\begin{equation*}
\int_{\tau = 0}^{\tilde c_1 h} \| \nabla( w - w_h) \|_{L^2(\gamma_\tau)}^p \mathrm{d}\tau   \leq h^{p( 3/2 - \varepsilon)}\|v\|_{L^2(\Omega)}^p.
\end{equation*}

The second integral is estimated using a local approximation property and the regularity result given in Lemma~\ref{lem:dual_regularity}. First, we insert $\tau^{1/2}\tau^{-1/2}$ and use the H\"older inequality with the same exponents as before:
\begin{align*}
&\int_{\tau = \tilde c_1 h}^D \tau^{-1/2} \tau^{1/2} \| \nabla( w - w_h) \|_{L^2(\gamma_\tau)}^p ~\mathrm{d}\tau   
\\
&\qquad\leq
\left( \int_{\tau = \tilde c_1 h}^D \tau^{-1/(2-p)}  ~\mathrm{d}\tau \right)^{(2-p)/2}	
\left( \int_{\tau = \tilde c_1 h}^D \tau^{1/p} \| \nabla( w - w_h) \|_{L^2(\gamma_\tau)}^2 ~\mathrm{d}\tau \right)^{p/2}  
\\&\qquad
\leq 
h^{- p \varepsilon/2} \| \tau^{1/2 - \varepsilon/2} \nabla(w - w_h) \|_{L^2(\Omega\backslash \strip(\tilde c_1 h))}^p.
\end{align*}
Based on the discussion in~\cite[Section 5.1.2]{wohlmuth:12}, we derive the bound
\begin{align}
 &\| \tau^{1/2 - \varepsilon/2} \nabla(w - w_h) \|_{L^2(\Omega\backslash \strip(\tilde c_1 h))}  \notag \\
  &\qquad \leq \| \tau^{1/2 - \varepsilon/2} \nabla(w - I_h w) \|_{L^2(\Omega\backslash \strip(\tilde c_2h))} +  \| \tau^{-1/2 - \varepsilon/2}( w - w_h )\|_{L^2(\Omega\backslash \strip({\tilde c_2 h}))}
   \label{eq:result_wahlbin}
\end{align}
for an arbitrary but fixed $\tilde c_2$, if $\tilde c_1$ is chosen sufficiently large.
This estimate is based on local approximation properties found in~\cite{wahlbin:91, wahlbin:95} and a Besicovitch covering argument.

To estimate the first term, we exploit the regularity result derived in Lemma~\ref{lem:dual_regularity}. Based on $\tilde c$, which is given from the previous lemma, we can choose $\tilde c_2$ and $\tilde c_1$ sufficiently large, such that
\begin{align*}
 \| \tau^{1/2 - \varepsilon/2} \nabla(w - I_h w) \|_{L^2(\Omega\backslash \strip(\tilde c_2h))} 
 & \leq ch \| \tau^{1/2 - \varepsilon/2} \nabla^2 w \|_{L^2(\Omega\backslash \strip(\tilde ch))} \\
 & \leq c h^{3/2-\varepsilon/2} \|v\|_{L^2(\Omega)}.
\end{align*}

Using the convexity of $\Omega$ the second term of~\eqref{eq:result_wahlbin} can be bounded easily by
\begin{align*}
\| \tau^{-1/2 - \varepsilon/2} (w - w_h )\|_{L^2(\Omega\backslash \strip(\tilde c_2 h))} 
	& \leq  h^{-1/2 - \varepsilon/2} \| w - w_h \|_{L^2(\Omega)} \\&\leq h^{3/2 - \varepsilon/2}  \|v\|_{L^2(\Omega)}.  \myendproof
\end{align*}

The previously shown bounds in anisotropic norms are sufficient to show primal estimates in a neighbourhood of the boundary. 
For a final bound of the Lagrange multiplier, we also need to consider a dual problem with Neumann data, as defined in~\cite[Section 5.2]{wohlmuth:12}. Given $v\in L^2(\Omega)$, $\supp v \subset \overline{\strip(h)}$, define $w_v^N$ such that
\begin{equation} \label{eq:dual_neumann}
-\Delta w_v^N = v - \frac{1}{\Omega} \int_{\Omega} v~\mathrm{d}x  ~ \text{ in } \Omega, \quad \partial_n w_v^N = 0,~ \text{ on } \Gamma, \quad \int_\Omega w_v^N~\mathrm{d}x = 0. 
\end{equation}

Denote by $V_h^{-1}$ the space of discrete functions without any restriction of the boundary values. Using the same arguments as before, we can adapt the proof of~\cite[Lemma 5.7]{wohlmuth:12} and show the following statement based on the dual Neumann problem.
\begin{corollary}\label{cor:dual_neumann}
Let $u \in V \cap H^{5/2-\varepsilon}(\Omega)$ and $u_h^N  \in V_h^{-1}$ satisfy the orthogonality condition
$a(u-u_h^N , v_h) = 0$ for  $v_h \in V_h^{-1}$ and $\int_{\strip(h)} u - u_h^N ~\mathrm{d}x = 0$,
then
\begin{align*}
 \| u -u_h^N  \|_{L^2(\strip(h))} &\leq c h^{5/2-\varepsilon} \|u\|_{H^{5/2-\varepsilon}(\Omega)},\\
\left| u - u_h^N  \right|_{H^{1/2}(\Gamma)} &\leq  c h^{3/2-\varepsilon} \|u\|_{H^{5/2-\varepsilon}(\Omega)}.
\end{align*}
\end{corollary}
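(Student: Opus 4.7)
The plan is to adapt the duality argument of \cite[Lemma 5.7]{wohlmuth:12} to the reduced regularity setting $H^{5/2-\varepsilon}(\Omega)$, relying on the anisotropic machinery just assembled in Lemmas \ref{sz:approximation_b}, \ref{lem:dual_regularity} and \ref{approximation_dirichlet_pb}. The $L^2(\strip(h))$ bound proceeds by dualizing against an $L^2$ test function supported in $\overline{\strip(h)}$; the boundary bound then follows directly from the standard $H^1(\Omega)$-Galerkin rate via the trace theorem.

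For the first estimate, I would fix $v\in L^2(\Omega)$ with $\supp v \subset \overline{\strip(h)}$ and let $w = w_v^N$ solve the dual Neumann problem \eqref{eq:dual_neumann}. Integration by parts, exploiting $\partial_n w = 0$ on $\Gamma$, together with the zero-mean hypothesis on $u - u_h^N$ over $\strip(h)$ and $\supp v \subset \strip(h)$, produces
\begin{align*}
\int_{\strip(h)} (u - u_h^N)\,v\,\mathrm{d}x \;=\; a(u - u_h^N, w) \;+\; \bar v \int_\Omega (u - u_h^N)\,\mathrm{d}x,
\end{align*}
with $\bar v = |\Omega|^{-1}\int_\Omega v\,\mathrm{d}x$. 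The residual term is controlled by $|\bar v|\,|\Omega|^{1/2}\|u - u_h^N\|_{L^2(\Omega)} \leq c\, h^{1/2}\|v\|_{L^2(\Omega)}\cdot h^{2}\|u\|_{H^{5/2-\varepsilon}(\Omega)}$ using the trivial bound $|\bar v|\leq c h^{1/2}\|v\|_{L^2(\Omega)}$ and a standard Aubin--Nitsche step, so it is harmless. For the main term I would invoke Galerkin orthogonality $a(u - u_h^N, w) = a(u - u_h^N, w - P_h w)$ and apply the anisotropic H\"older inequality with $p = (1-\varepsilon)^{-1}$ and $q = \varepsilon^{-1}$,
\begin{align*}
|a(u - u_h^N, w - P_h w)| \;\leq\; \|\nabla(u - u_h^N)\|_{L(q,2)}\;\|\nabla(w - P_h w)\|_{L(p,2)}.
\end{align*}
The second factor is $\leq c\, h^{3/2-\varepsilon}\|v\|_{L^2(\Omega)}$ via the Neumann analogue of Lemma \ref{approximation_dirichlet_pb}, whose proof transfers verbatim because Lemma \ref{lem:dual_regularity} rests only on interior elliptic regularity of $w$. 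For the first factor I would split via $P_h u$: the piece $\|\nabla(u - P_h u)\|_{L(q,2)}$ is handled by Lemma \ref{sz:approximation_b} ($\leq c\, h\,\|u\|_{H^{5/2-\varepsilon}(\Omega)}$), while $\|\nabla(P_h u - u_h^N)\|_{L(q,2)}$ is bounded using the interpolation inverse estimate $\|\nabla v_h\|_{L(q,2)}\leq c\, h^{-1/2+\varepsilon}\|\nabla v_h\|_{L^2(\Omega)}$ (obtained from $L(2,2)$-$L(\infty,2)$ interpolation of the elementwise inverse inequality) together with the standard $H^1$-Galerkin rate $\leq c\, h^{3/2-\varepsilon}\|u\|_{H^{5/2-\varepsilon}(\Omega)}$, yielding again $\leq c\, h\,\|u\|_{H^{5/2-\varepsilon}(\Omega)}$. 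The product of the two factors is of order $h^{5/2-\varepsilon}$, as required.

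The $H^{1/2}(\Gamma)$ bound follows directly from the trace theorem applied to the standard energy estimate $\|u - u_h^N\|_{H^1(\Omega)} \leq c\, h^{3/2-\varepsilon}\|u\|_{H^{5/2-\varepsilon}(\Omega)}$, which is immediate from C\'ea's lemma combined with Scott--Zhang interpolation. The main obstacle I foresee is verifying the Neumann variant of Lemma \ref{approximation_dirichlet_pb}: although the proof structure carries over, one must check that the interior-regularity covering argument and the weighted estimate of Lemma \ref{lem:dual_regularity} remain valid when $w$ satisfies a Neumann rather than a Dirichlet condition on $\Gamma$. This verification is routine since all key steps depend only on the harmonicity of $w$ away from $\supp v$, a property insensitive to the boundary condition.
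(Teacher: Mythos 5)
Your overall architecture for the $L^2(\strip(h))$ bound --- dualizing against strip-supported data via the Neumann problem \eqref{eq:dual_neumann}, Galerkin orthogonality, the anisotropic H\"older inequality, and a Neumann analogue of Lemma~\ref{approximation_dirichlet_pb} --- is indeed the route the paper intends (it simply cites the adaptation of \cite[Lemma 5.7]{wohlmuth:12}). However, two of your key steps rest on a rate that does not exist: you twice invoke a ``standard $H^1$-Galerkin rate'' $\|u-u_h^N\|_{H^1(\Omega)}\leq c\,h^{3/2-\varepsilon}\|u\|_{H^{5/2-\varepsilon}(\Omega)}$. For piecewise linear elements the best-approximation error in $H^1(\Omega)$ saturates at order $h^{\min(1,\,s-1)}=h$ no matter how smooth $u$ is, so C\'ea's lemma gives only $\mathcal{O}(h)$; gaining the extra half power for boundary quantities is precisely what the strip/duality machinery exists for. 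Consequently: (i) your bound for the discrete part becomes $\|\nabla(P_hu-u_h^N)\|_{L(q,2)}\leq c h^{-1/2+\varepsilon}\cdot ch = ch^{1/2+\varepsilon}$, so your main term is only $\mathcal{O}(h^{1/2+\varepsilon})\cdot\mathcal{O}(h^{3/2-\varepsilon})=\mathcal{O}(h^2)$, not $\mathcal{O}(h^{5/2-\varepsilon})$; and (ii) your entire argument for the $H^{1/2}(\Gamma)$ estimate (global trace theorem plus energy estimate) collapses to $\mathcal{O}(h)$. The repair for (i) is not to interpolate $w$ at all (note also that $P_h$ preserves the Dirichlet condition, which $w$ does not satisfy, and that Lemma~\ref{approximation_dirichlet_pb} is a statement about the Galerkin error, proven via Wahlbin-type local estimates and $L^2$ duality, not about $w-P_hw$): take $w_h\in V_h^{-1}$ to be the Galerkin approximation of $w$. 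Then primal orthogonality gives $a(u-u_h^N,w)=a(u-u_h^N,w-w_h)$, and the dual orthogonality annihilates $a(P_hu-u_h^N,w-w_h)$ exactly, since $P_hu-u_h^N\in V_h^{-1}$; what remains is $a(u-P_hu,w-w_h)\leq\|\nabla(u-P_hu)\|_{L(q,2)}\,\|\nabla(w-w_h)\|_{L(p,2)}\leq ch\cdot ch^{3/2-\varepsilon}$, and no anisotropic bound on any discrete error is ever needed. For (ii), the boundary estimate must be derived \emph{from} the strip estimate: for instance, split $u-u_h^N=(u-I_hu)+(I_hu-u_h^N)$, bound the interpolation part on the boundary using $u|_{\Gamma_S}\in H^{2-\varepsilon}(\Gamma_S)$, and for the discrete part combine the inverse and discrete trace inequalities, $|v_h|_{H^{1/2}(\Gamma)}\leq ch^{-1}\|v_h\|_{L^2(\strip(h))}$, with the first estimate and the strip regularity $|u|_{H^2(\strip(ch))}\leq ch^{1/2-\varepsilon}\|u\|_{H^{5/2-\varepsilon}(\Omega)}$.

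There is a second, independent gap in your residual term. Writing $e=u-u_h^N$ and $\bar e_\Omega$ for its mean over $\Omega$, the normalization $\int_{\strip(h)}e\,\mathrm{d}x=0$ does \emph{not} yield $\|e\|_{L^2(\Omega)}\leq ch^2$ by ``standard Aubin--Nitsche'': duality controls only the mean-free part, $\|e-\bar e_\Omega\|_{L^2(\Omega)}\leq ch^2$, while the constant component is controlled only through the strip, $|\bar e_\Omega|\leq c\,|\strip(h)|^{-1/2}\|e-\bar e_\Omega\|_{L^2(\strip(h))}\leq ch^{-1/2}\cdot ch^2=ch^{3/2}$; any sharper bound on $\|e-\bar e_\Omega\|_{L^2(\strip(h))}$ would be circular, being essentially the statement under proof. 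Your residual is therefore only $\mathcal{O}(h^{1/2})\cdot\mathcal{O}(h^{3/2})=\mathcal{O}(h^2)$, again half an order short. The clean fix is to prove the strip estimate first for the error normalized to zero mean over $\Omega$, for which the residual term $\bar v\int_\Omega e\,\mathrm{d}x$ vanishes identically; since the strip mean minimizes $\|e-c\|_{L^2(\strip(h))}$ over constants $c$, passing to the stated normalization can only decrease the left-hand side, and the $H^{1/2}(\Gamma)$ seminorm is insensitive to constants altogether.
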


\subsection{Error bound for the Dirichlet--Neumann map} \label{sec:main_proof}
With the results of the previous subsection, we can estimate the $H^{-1/2}(\Gamma_S)$ error of the Dirichlet--Neumann map $\Nf - S ( \left. \Tr u \right|_{\Gamma_S} )$ and the mesh-dependent Dirichlet--Neumann map $\Nfh - S_h ( \left. \Tr u \right|_{\Gamma_S} )$, see Section~\ref{subsec:reformulation}, in two steps. 
This bound is the last step to show the primal estimate in Theorem~\ref{thm:main_result}. 
Firstly, we relate the error of the dual variable to the error of the primal variable in a small strip around $\Gamma$ using the dual Neumann problem~\eqref{eq:dual_neumann}. Secondly, the error in the strip is estimated using the dual Dirichlet problem~\eqref{dual_dirichlet} and the approximation results derived in the anisotropic norms.

\begin{theorem}\label{thm:estimate:steklov} 
Assuming the solution $u$ of the Signorini problem~\eqref{vi_cont} to be in $H^{5/2-\varepsilon}(\Omega)$, then it holds
\[
\| \lambda - \widetilde \lambda_h  \|_{H^{-1/2}(\Gamma_S)} 
 \leq ch^{3/2 - \varepsilon} \| u \|_{H^{5/2-\varepsilon}(\Omega)}.
\]
\end{theorem}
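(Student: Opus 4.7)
The plan is a duality argument:
\[
\|\lambda - \widetilde{\lambda}_h\|_{H^{-1/2}(\Gamma_S)} = \sup_{\phi \in W\setminus\{0\}}\frac{\langle \phi, \lambda - \widetilde{\lambda}_h\rangle_{\Gamma_S}}{\|\phi\|_W}.
\]
For a fixed test function $\phi\in W$, I would extend it to $\phi_e\in V$ by harmonic extension (so that $\|\phi_e\|_V\leq c\|\phi\|_W$) and subtract the continuous and discrete saddle point relations defining $(u,\lambda)$ and $(\widetilde u_h,\widetilde\lambda_h)$. Inserting the Scott--Zhang quasi-interpolation $P_h\phi_e$ and exploiting the biorthogonality $\langle v-P_h v,\mu_h\rangle_{\Gamma_S}=0$ for $\mu_h\in M_h$, the residual splits as
\[
\langle \phi,\lambda-\widetilde{\lambda}_h\rangle_{\Gamma_S} = \langle \phi - P_h\phi_e,\lambda-\mu_h\rangle_{\Gamma_S} - a(u-\widetilde{u}_h,P_h\phi_e)
\]
for any $\mu_h\in M_h$. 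The first (boundary) term is controlled by the $L^2(\Gamma_S)$ approximation~\eqref{sz:approximation_a_eq} combined with the best $M_h$-approximation of $\lambda=\partial_n u|_{\Gamma_S}\in H^{1-\varepsilon}(\Gamma_S)$, and already produces the target rate $h^{3/2-\varepsilon}$.

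The volume term is the delicate one. I would write $a(e,P_h\phi_e)=a(e,\phi_e)+a(e,P_h\phi_e-\phi_e)$ with $e\coloneq u-\widetilde u_h$. Since $\phi_e$ is harmonic, integration by parts converts $a(e,\phi_e)$ into $\langle e,\partial_n\phi_e\rangle_{\Gamma_S}$, which is bounded by $|e|_{H^{1/2}(\Gamma)}\|\phi\|_W$; the trace error $|e|_{H^{1/2}(\Gamma)}$ is then estimated via Corollary~\ref{cor:dual_neumann}, after comparing $\widetilde u_h$ with the Neumann Galerkin projection $u_h^N$ and controlling their difference in $L^2(S(h))$ through the dual Dirichlet problem~\eqref{dual_dirichlet} and Lemma~\ref{approximation_dirichlet_pb}. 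For the remaining term $a(e,P_h\phi_e-\phi_e)$, I would apply the anisotropic H\"older inequality with $1/p+1/q=1$, $p=(1-\varepsilon)^{-1}$, $q=\varepsilon^{-1}$, so that $\|\nabla(P_h\phi_e-\phi_e)\|_{L(q,2)}$ is absorbed by the stability of $P_h$ in the anisotropic norm, while the sharper factor $\|\nabla e\|_{L(p,2)}$ is produced by another duality with the dual Dirichlet problem and Lemmas~\ref{lem:dual_regularity} and~\ref{approximation_dirichlet_pb}.

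The main obstacle is exactly obtaining these strip-concentrated estimates for $e$: a naive global $H^1$ bound only provides $O(h)$, so the combined use of anisotropic $L(p,2)$ norms together with the weighted regularity of the dual Dirichlet problem is essential to extract the extra $h^{1/2-\varepsilon}$ factor. Once each contribution is of order $h^{3/2-\varepsilon}\|u\|_{H^{5/2-\varepsilon}(\Omega)}\|\phi\|_W$, dividing by $\|\phi\|_W$ and taking the supremum over $\phi\in W$ yields the claim.
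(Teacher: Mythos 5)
Your overall architecture---dualizing the $H^{-1/2}(\Gamma_S)$ norm, inserting a quasi-interpolant of the extended test function, splitting into a boundary consistency term and a volume term, and then comparing $\widetilde u_h$ with $u_h^N$ and reducing to $\|u-\widetilde u_h\|_{L^2(\strip(h))}$---is close in spirit to the paper's, and those parts are essentially sound (one minor slip: you invoke~\eqref{sz:approximation_a_eq} for $\phi_e$, but that estimate requires $H^{5/2-\varepsilon}(\Omega)$ regularity, which the harmonic extension of a generic $\phi\in W$ does not have; the boundary term still gives $h^{3/2-\varepsilon}$ by pairing the $O(h^{1/2})$ Scott--Zhang error of the $H^{1/2}$ trace with the $O(h^{1-\varepsilon})$ best approximation of $\lambda$). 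The genuine gap is your treatment of $a(e,P_h\phi_e-\phi_e)$, and it fails on both factors. First, $\|\nabla(P_h\phi_e-\phi_e)\|_{L(q,2)}$ with $q=\varepsilon^{-1}$ cannot be ``absorbed by stability of $P_h$'': the paper's Lemma~\ref{sz:approximation_b} needs $v\in H^{5/2-\varepsilon}(\Omega)$, whereas $\phi_e$ is merely $H^1(\Omega)$; by Fubini the function $\tau\mapsto\|\nabla\phi_e\|_{L^2(\gamma_\tau)}$ is only square-integrable in $\tau$, so $\|\nabla\phi_e\|_{L(q,2)}$ for $q>2$ need not even be finite and certainly admits no bound by $c\|\phi\|_W$. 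Second, $\|\nabla e\|_{L(p,2)}$ cannot carry the rate $h^{3/2-\varepsilon}$: Lemmas~\ref{lem:dual_regularity} and~\ref{approximation_dirichlet_pb} concern the dual problem $w=T^Dv$ whose datum is supported in $\overline{\strip(h)}$, and it is precisely this local support (harmonicity away from the strip plus weighted interior regularity) that produces the extra $h^{1/2-\varepsilon}$. The primal error $e=u-\widetilde u_h$ has no such structure: for a globally $H^{5/2-\varepsilon}$-smooth solution its gradient error is equidistributed over $\Omega$ with size $O(h)$, so $\|\nabla e\|_{L^2(\gamma_\tau)}$ is of order $h$ uniformly in $\tau$ and hence every $L(p,2)$ norm of $\nabla e$ is of order $h$ and no better; no duality argument can upgrade it.

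The paper avoids this term altogether. Instead of testing with continuous $\phi$ and interpolating, it uses the discrete inf-sup condition, so only discrete test functions $z_h\in W_h$ with discrete harmonic extensions $\mathcal{E}_hz_h$ appear; the price is the harmless best-approximation term $\inf_{\mu_h\in M_h}\|\lambda-\mu_h\|_{H^{-1/2}(\Gamma_S)}\leq c\,h^{3/2-\varepsilon}\|\lambda\|_{H^{1-\varepsilon}(\Gamma_S)}$. Then $a(\widetilde u_h-u,\mathcal{E}_hz_h)=a(\widetilde u_h-u_h^N,\mathcal{E}_hz_h)$ by the orthogonality defining $u_h^N$, discrete harmonicity of both arguments reduces this to $|\widetilde u_h-u_h^N|_{H^{1/2}(\Gamma)}$, and an inverse inequality reduces it further to $h^{-1}\|u-\widetilde u_h\|_{L^2(\strip(h))}$ plus terms controlled by Corollary~\ref{cor:dual_neumann}. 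The anisotropic H\"older pairing is used only inside that strip estimate, with the roles assigned the only way the lemmas permit: the dual error $\|\nabla(w-w_h)\|_{L(p,2)}$ carries $h^{3/2-\varepsilon}$ (strip-supported data, Lemma~\ref{approximation_dirichlet_pb}), while the $L(q,2)$ factor falls on $\nabla(u-P_hu)$, the interpolation error of the smooth solution, where Lemma~\ref{sz:approximation_b} applies. If you reorganize your volume term accordingly---replace $u$ by $u_h^N$ first, exploit discreteness, and run the anisotropic duality only inside the $L^2(\strip(h))$ bound---your proof coincides with the paper's.
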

\begin{proof} 
The proof is divided into two steps. Firstly, we bound the dual error by the primal error on a small neighbourhood of the boundary. Secondly, we bound the primal error on a small strip using the anisotropic estimates stated in Lemma~\ref{sz:approximation_b} and~\ref{approximation_dirichlet_pb}.

To be more precise, the first step is to show the upper bound
\begin{align} \label{eq:dual_estimate_general}
\| \lambda - \widetilde \lambda_h  \|_{H^{-1/2}(\Gamma_S)} 
 \leq ch^{3/2 - \varepsilon} \| u \|_{H^{5/2-\varepsilon}(\Omega)} + c \frac1h\| u - \widetilde u_h \|_{L^2(\strip(h))}.
\end{align}
We use the saddle point formulation to represent the dual error by discrete harmonic functions on the domain. Using the stability of the harmonic extension and an inverse trace inequality, we can relate the dual error to the primal error on the strip $S(h)$.

We start using the uniform inf-sup stability in the $H^{-1/2}(\Gamma_S)$ norm to get
\begin{align*}
\| \lambda - \widetilde \lambda_h \|_{H^{-1/2}(\Gamma_S)} 
& \leq c  \inf_{ \mu_h \in M_h } \| \lambda - \mu_h \|_{ H^{-1/2}(\Gamma_S) } + c \sup_{z_h \in W_h} \frac{ \langle z_h, \lambda - \widetilde \lambda_h \rangle_{\Gamma_S} }{ \|z_h\|_{H^{1/2}_{00}(\Gamma_S) } } \notag \\
& \leq c h^{3/2-\varepsilon} \| \lambda \|_{H^{1-\varepsilon}(\Gamma_S)} + c \sup_{z_h \in W_h} \frac{ a(\widetilde {u}_h -  u, \mathcal{E}_h z_h) }{ \|z_h\|_{H^{1/2}_{00}(\Gamma_S) } },
\end{align*}
where $\mathcal{E}_h z_h \in V_h$ is the discrete harmonic extension of $z_h^\Gamma \in H^{1/2}(\Gamma)$ which is the trivial extension to $\Gamma$ of $z_h \in W_h \subset H^{1/2}_{00}(\Gamma_S)$.

We replace $u$ by a discrete function $u_h^N \in V_h^{-1}$ satisfying the requirements of Corollary~\ref{cor:dual_neumann}. We also use the fact that $\mathcal{E}_h z_h $ and $\widetilde u_h - u_h^N$ are discrete harmonic to see
\begin{align*}
\sup_{z_h \in W_h} \frac{ a(\widetilde{u}_h - u, \mathcal{E}_h z_h) }{ \|z_h\|_{H^{1/2}_{00}(\Gamma_S) } }  
& = \sup_{z_h \in W_h} \frac{ a(\widetilde{u}_h - u_h^N, \mathcal{E}_h z_h) }{ \|z_h\|_{H^{1/2}_{00}(\Gamma_S) } } \leq c\left|\widetilde u_h - u_h^N\right|_{H^{1/2}(\Gamma)}.
\end{align*}
Using an inverse inequality, we get
\begin{align*}
\left|\widetilde u_h - u_h^N\right|_{H^{1/2}(\Gamma)}
& \leq  c \frac1h \|\widetilde u_h - u_h^N\|_{L^2(\strip(h))} \notag \\&\leq c \frac1h \|u - u_h^N \|_{L^2(\strip(h))} +  c \frac1h \|u - \widetilde u_h\|_{L^2(\strip(h))}.
\end{align*}
Now Corollary~\ref{cor:dual_neumann} results in~\eqref{eq:dual_estimate_general}.

To bound
$\| u - \widetilde u_h \|_{L^2(\strip(h))}  
$
we employ different Galerkin orthogonalities to get a  suitable representation of the error in the whole domain based on the solution of the dual problem.
Applying Green's formula, we obtain the representation of the local error $e_h := u - \widetilde u_h$:
\begin{align*}
\|e_h\|_{L^2(\strip(h))} 
 &= \sup_{\|v\|_{L^2(\strip(h))} = 1} (e_h, v)_{L^2(\Omega)}      
  =\sup_{ \|v\|_{L^2(\strip(h))} = 1} (e_h, -\Delta (T^Dv))_{L^2(\Omega)} \\
 &=  \sup_{ \|v\|_{L^2(\strip(h))} = 1} a(T^Dv, e_h) - \langle e_h, \partial_n (T^Dv) \rangle_{\Gamma_S},
\end{align*}
where $T^Dv \in H_0^1(\Omega)$ is the solution to the dual problem~\eqref{dual_dirichlet}.

Let us introduce the conforming finite element approximation of $w := T^Dv$ as $ w_h \in V_h \cap H_0^1(\Omega)$, and denote $\lambda_w := - \left. \partial_n w\right|_{\Gamma_S}$.  We recall the following  orthogonality results: Using the Galerkin orthogonality in the domain for the variational inequality~\eqref{vi_discrete}, it holds $
a(w_h, e_h) = 0$, since $\Trace w_h = 0$. 
 We recall, that the definition of the Scott--Zhang operator $P_h$, see Section~\ref{subsec:anisotropic}, guarantees  $\langle u - P_h u, \mu_h \rangle_{\Gamma_S} = 0$ as well as $\langle P_h u - \widetilde u _h, \mu_h \rangle_{\Gamma_S} = 0$ for $\mu_h \in M_h$.
We can then conclude with
\begin{align*}
a(w - w_h, P_h u - \widetilde u_h) + \langle P_h u - \widetilde u_h, \lambda_w\rangle_{\Gamma_S} &= 0.
\end{align*}
For $1/p + 1/q = 1$, we find using the terms discussed above
\begin{align*}
&a(w, e_h) + \langle e_h, \lambda_w \rangle_{\Gamma_S} 
 = a(w -w_h, u - P_h u) + \inf_{\mu_h \in M_h} \langle u - P_h u, \lambda_w - \mu_h \rangle_{\Gamma_S} \notag \\
 & \quad \leq \| \nabla( w -w_h)\|_{L(p,2)} \| \nabla( u - P_h u) \|_{L(q,2)}  \notag \\
 & \qquad + \| u - P_h u\|_{L^2(\Gamma_S)}  \inf_{\mu_h \in M_h}\|\lambda_w - \mu_h \|_{L^2(\Gamma_S)}.
\end{align*}
The convexity of $\Omega$ guarantees $\lambda_w \in H^{1/2}(\Gamma_S)$ with $\|\lambda_w\|_{H^{1/2}(\Gamma_S)} \leq c \|v\|_{L^2(\Omega)}$. Setting $q = \varepsilon^{-1}, p = (1-\varepsilon)^{-1}$, the best approximation of the dual space, Equation~\eqref{sz:approximation_a_eq} and Lemmas~\ref{sz:approximation_b} and~\ref{approximation_dirichlet_pb}  yield the result.
\end{proof}

Summarizing the results of Lemmas~\ref{lem:strang},~\ref{lem:estimate_unperturbed} and Theorem \ref{thm:estimate:steklov} shows the a priori result for the primal variable of Theorem~\ref{thm:main_result}.

\subsection{An improved result on the $L^2(\Omega)$ error} \label{sec:L2_improved}
Based on~\cite{natterer:76} a convergence order $h^{3/2}$ in the $L^2(\Omega)$ norm was stated in~\cite{hild:01}. However the required $H^2(\Omega)$ regularity of the dual problem is very strong, since the dual problem is a variational inequality. 
Based on the improved trace estimate, we can show almost the same order without involving a dual inequality problem.
\begin{corollary}
Let $u$ be the solution of~\eqref{vi_cont} and $u_h$ be the solution of~\eqref{vi_discrete}. Assuming $u\in H^{5/2-\varepsilon}(\Omega)$ and that Assumption~\eqref{eq:fractal_assumption} holds for $d=3$, then we get
\[
\|u - u_h\|_{L^2(\Omega)} \leq c h^{3/2-\varepsilon} \|u\|_{H^{5/2-\varepsilon}(\Omega)}.
\]
\end{corollary}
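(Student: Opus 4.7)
The plan is a standard Aubin--Nitsche duality argument. The dual problem with right-hand side $u - u_h$ enjoys $H^2$-regularity (since $\Omega$ is polyhedral and convex), while the two defects produced by the variational inequality/saddle-point structure are controlled by the trace estimate of Theorem~\ref{thm:main_result} on the one hand and by the standard $H^1(\Omega)$ rate on the other.

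First I would introduce the dual problem: let $w\in H^1_0(\Omega)$ be the weak solution of $-\Delta w = u - u_h$ in $\Omega$ with $w = 0$ on $\Gamma$. Convexity of $\Omega$ yields $w\in H^2(\Omega)$ with $\|w\|_{H^2(\Omega)} \leq c\|u-u_h\|_{L^2(\Omega)}$. Green's formula, together with the fact that $u - u_h$ vanishes on $\Gamma_D$, gives the representation
\[
\|u-u_h\|_{L^2(\Omega)}^2 \;=\; a(u-u_h,w) \;-\; \langle u-u_h, \partial_n w\rangle_{\Gamma_S}.
\]

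For the volume term, the saddle-point formulations \eqref{spp_cont_line_1} and \eqref{spp_discrete_line_1} yield the Galerkin-type identity $a(u-u_h, v_h) = \langle v_h, \lambda_h - \lambda\rangle_{\Gamma_S}$ for every $v_h\in V_h$. I would choose $v_h = I_h w$, the piecewise linear nodal interpolant of $w$. Since $w$ vanishes at every boundary vertex, $I_h w\equiv 0$ on $\Gamma$, so the pairing $\langle I_h w, \lambda_h - \lambda\rangle_{\Gamma_S}$ vanishes identically and $a(u-u_h, I_h w) = 0$. The standard $H^1$ rate (valid under our regularity) then gives
\[
|a(u-u_h, w)| = |a(u-u_h, w - I_h w)| \leq c\,h\,\|u-u_h\|_{H^1(\Omega)}\,\|w\|_{H^2(\Omega)} \leq c\,h^{2}\,\|u\|_{H^{5/2-\varepsilon}(\Omega)}\,\|u-u_h\|_{L^2(\Omega)},
\]
which is strictly better than the target rate.

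For the boundary term, $H^2$-regularity and the trace theorem imply $\partial_n w \in H^{1/2}(\Gamma) \hookrightarrow H^{-1/2}(\Gamma_S)$ with $\|\partial_n w\|_{H^{-1/2}(\Gamma_S)} \leq c\,\|u-u_h\|_{L^2(\Omega)}$. Pairing with the trace estimate from Theorem~\ref{thm:main_result} then produces
\[
|\langle u-u_h, \partial_n w\rangle_{\Gamma_S}| \leq \|u-u_h\|_{H^{1/2}_{00}(\Gamma_S)}\,\|\partial_n w\|_{H^{-1/2}(\Gamma_S)} \leq c\,h^{3/2-\varepsilon}\,\|u\|_{H^{5/2-\varepsilon}(\Omega)}\,\|u-u_h\|_{L^2(\Omega)}.
\]
Combining both contributions and dividing by $\|u-u_h\|_{L^2(\Omega)}$ yields the claim, with the boundary term being the bottleneck and producing the exponent $3/2-\varepsilon$. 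The only nonroutine point to verify carefully is that $I_h w$ really annihilates the dual pairing with both $\lambda$ and $\lambda_h$, which is a direct consequence of the homogeneous Dirichlet trace of $w$ on the whole of $\Gamma$ together with the nodal definition of $I_h$; this is the main pitfall but not a serious technical obstacle.
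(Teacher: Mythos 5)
Your proposal is correct and follows essentially the same route as the paper: the same dual problem $-\Delta w = u-u_h$ with $w\in H^1_0(\Omega)$ and $H^2$-regularity from convexity, the same Green's-formula splitting into a volume term and a boundary term, with the boundary term controlled by the trace estimate of Theorem~\ref{thm:main_result} and yielding the rate $h^{3/2-\varepsilon}$. The only differences are cosmetic: you spell out the ``standard'' Aubin--Nitsche treatment of the volume term via the nodal interpolant $I_h w$ and the saddle-point identity (which the paper leaves implicit), and you estimate the boundary term through the $H^{1/2}_{00}(\Gamma_S)\times H^{-1/2}(\Gamma_S)$ duality pairing, whereas the paper uses the $L^2(\Gamma_S)$ pairing together with the trivial bound $\|u-u_h\|_{L^2(\Gamma_S)}\leq \|u-u_h\|_{H^{1/2}_{00}(\Gamma_S)}$.
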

\mybeginproof
The proof is based on an Aubin--Nitsche type argument using a linear dual problem with homogeneous Dirichlet conditions.  Due to the nonlinear Signorini condition, an additional error term on $\Gamma_S$ needs to be bounded.

Let $w\in H^1_0(\Omega)$ solve 
$
-\Delta w = u - u_h
$ in $\Omega$.
Since $\Omega$ is convex, it holds $\|w \|_{H^2(\Omega)}\leq c \|u-u_h\|_{L^2(\Omega)}$ and $\| \partial_n w_v \|_{L^2(\Gamma_S)} \leq \|w \|_{H^{2}(\Omega)} $. Applying 
Green's formula yields
\begin{align*}
\|u - u_h\|_{L^2(\Omega)}^2 &= \int_\Omega \nabla  w^{\rm T} \nabla (u-u_h) ~\mathrm{d}x -   \langle u - u_h , \partial_n w_v \rangle_{\Gamma_S}.
\end{align*}
The first term can be bounded as it is standard in Aubin--Nitsche arguments, due to the homogeneous Dirichlet values of $w$.
For the second term, we use the trace estimate provided in Theorem~\ref{thm:main_result}:
\begin{align*}
\langle u - u_h , \partial_n w \rangle_{\Gamma_S} &\leq \|u - u_h\|_{L^{2}(\Gamma_S)}  \|\partial_n w\|_{L^2(\Gamma_S)} \\&\leq c h^{3/2-\varepsilon} \|u\|_{H^{5/2-\varepsilon}(\Omega)} \|w\|_{H^{2}(\Omega)}. \quad \myendproof
\end{align*}
\begin{remark}
We note that in the proof of the $L^2(\Omega)$ norm we use the trivial bound $\|u - u_h\|_{L^2(\Gamma_S)}\leq\|u - u_h\|_{H^{1/2}(\Gamma_S)}$. Thus an extra $h^{1/2}$ would be possibly gained, if a higher order $L^2(\Gamma_S)$ bound was available.	
\end{remark}

\section{Lagrange multiplier estimates}
The $H^{-1/2}(\Gamma_S)$ norm for the Lagrange multiplier of the Signorini problem arising in the saddle point formulation~\eqref{spp_cont} can be estimated using similar arguments as those used in Theorem~\ref{thm:estimate:steklov}.  Due to the given primal estimate of Theorem~\ref{thm:main_result}, no estimate on a strip is needed here.
By standard techniques, the $L^2(\Gamma_S)$ norm can also be estimated.
\begin{theorem}\label{thm:lagrange_estimates}
Let $(u, \lambda)$ be the solution of the saddle point formulation~\eqref{spp_cont}. If the regularity requirement $u\in H^{5/2-\varepsilon}(\Omega)$ and for $d=3$ the assumption~\eqref{eq:fractal_assumption} hold, then
	\begin{align*}
		\| \lambda - \lambda_h \|_{H^{-1/2}(\Gamma_S)} &\leq c  h^{3/2 - \varepsilon} \|u\|_{H^{5/2-\varepsilon}(\Omega)},  \\
		\| \lambda - \lambda_h \|_{L^2(\Gamma_S)} &\leq c  h^{1 - \varepsilon} \|u\|_{H^{5/2-\varepsilon}(\Omega)}. 
	\end{align*}
\end{theorem}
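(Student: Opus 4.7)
The plan is to mimic the structure of the proof of Theorem~\ref{thm:estimate:steklov}, with the intermediate dual variable $\widetilde\lambda_h$ introduced before Lemma~\ref{lem:strang} again serving as a bridge between $\lambda$ and $\lambda_h$. Since the primal trace estimate of Theorem~\ref{thm:main_result} is now available, no strip analysis is needed this time. I would start from the triangle inequality
\[
\|\lambda - \lambda_h\|_{H^{-1/2}(\Gamma_S)} \leq \|\lambda - \widetilde\lambda_h\|_{H^{-1/2}(\Gamma_S)} + \|\widetilde\lambda_h - \lambda_h\|_{H^{-1/2}(\Gamma_S)},
\]
whose first summand is bounded by $c h^{3/2-\varepsilon}\|u\|_{H^{5/2-\varepsilon}(\Omega)}$ directly by Theorem~\ref{thm:estimate:steklov}.

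For the second summand, subtracting the defining discrete saddle-point relations for $(\widetilde u_h,\widetilde\lambda_h)$ and $(u_h,\lambda_h)$ yields $a(\widetilde u_h - u_h, v_h) + \langle v_h, \widetilde\lambda_h - \lambda_h\rangle_{\Gamma_S} = 0$ for every $v_h \in V_h$. Restricted to $v_h$ vanishing on $\Gamma_S$ this shows that $\widetilde u_h - u_h$ is discrete harmonic in $V_h$, so that by the discrete inf--sup stability of $V_h$--$M_h$ and the stability of the discrete harmonic extension,
\[
\|\widetilde\lambda_h - \lambda_h\|_{H^{-1/2}(\Gamma_S)} \leq c\,|\widetilde u_h - u_h|_{H^1(\Omega)} \leq c\,\|\widetilde u_h - u_h\|_{H^{1/2}_{00}(\Gamma_S)}.
\]
I would split the trace as $(\widetilde u_h - u) + (u - u_h)$ on $\Gamma_S$: the contribution of $u - u_h$ is controlled directly by Theorem~\ref{thm:main_result}, while $\widetilde u_h|_{\Gamma_S}$ coincides with the biorthogonal projection of $u|_{\Gamma_S}$ onto $W_h$ because of the defining weak Dirichlet condition $\langle \widetilde u_h - u, \mu_h\rangle_{\Gamma_S} = 0$ for $\mu_h \in M_h$. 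Real interpolation at parameter $1/2$ between the standard $L^2$ and $H^1$ approximation estimates of this projection then gives $\|\widetilde u_h - u\|_{H^{1/2}_{00}(\Gamma_S)} \leq c h^{3/2-\varepsilon}\|u\|_{H^{5/2-\varepsilon}(\Omega)}$, which completes the $H^{-1/2}$ estimate.

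For the $L^2(\Gamma_S)$ bound I would pick a good approximation $\mu_h \in M_h$ of $\lambda$ with $\|\lambda - \mu_h\|_{L^2(\Gamma_S)} \leq c h^{1-\varepsilon}\|\lambda\|_{H^{1-\varepsilon}(\Gamma_S)}$ and combine it with the standard inverse estimate $\|\eta_h\|_{L^2(\Gamma_S)} \leq c h^{-1/2}\|\eta_h\|_{H^{-1/2}(\Gamma_S)}$ for $\eta_h \in M_h$, applied to $\eta_h = \mu_h - \lambda_h$. A triangle inequality together with the $H^{-1/2}$ bound just derived then produces the desired $h^{1-\varepsilon}$ rate, with $\|\lambda\|_{H^{1-\varepsilon}(\Gamma_S)} \leq c\|u\|_{H^{5/2-\varepsilon}(\Omega)}$ coming from the trace of $\partial_n u$. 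The only technical wrinkle I anticipate is verifying the $H^{1/2}_{00}$ approximation of the biorthogonal projection, but this is routine once one observes that functions in $W_h$ vanish on $\partial\Gamma_S$, so that $H^{1/2}_{00}$ and $H^{1/2}$ norms agree up to constants on the relevant difference.
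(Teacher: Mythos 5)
Your argument is sound in substance, but for the $H^{-1/2}(\Gamma_S)$ bound it takes a genuinely different route from the paper. The paper never passes through $\widetilde\lambda_h$: it starts from the Galerkin relation $a(u-u_h,v_h)+\langle v_h,\lambda-\lambda_h\rangle_{\Gamma_S}=0$, applies the discrete inf-sup condition directly to $\lambda-\lambda_h$, and bounds the resulting supremum $\sup_{z_h\in W_h} a(u_h-u,\mathcal{E}_h z_h)/\|z_h\|_{H^{1/2}_{00}(\Gamma_S)}$ by $\left|u_h-u_h^N\right|_{H^{1/2}(\Gamma)}$, where $u_h^N$ is the discrete Neumann solution of Corollary~\ref{cor:dual_neumann}; the conclusion then follows from that corollary together with Theorem~\ref{thm:main_result}. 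You instead use the triangle inequality through $\widetilde\lambda_h$, invoke Theorem~\ref{thm:estimate:steklov} as a black box for $\lambda-\widetilde\lambda_h$, and reduce $\widetilde\lambda_h-\lambda_h$ via inf-sup and discrete-harmonic stability to the trace error $\|\widetilde u_h-u_h\|_{H^{1/2}_{00}(\Gamma_S)}$, split as $(\widetilde u_h-u)+(u-u_h)$. The second piece is Theorem~\ref{thm:main_result}; the first requires an $H^{1/2}_{00}(\Gamma_S)$ approximation estimate for the biorthogonal projection $Q_h$ (you correctly note $\widetilde u_h|_{\Gamma_S}=Q_h(u|_{\Gamma_S})$, which in the paper's notation also equals $P_h u|_{\Gamma_S}$), an estimate the paper never states. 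Your route is more modular, since it recycles Theorem~\ref{thm:estimate:steklov} instead of re-running the Neumann-dual argument, but it costs you this extra mortar-projection bound: the $L^2(\Gamma_S)$ endpoint is available as~\eqref{sz:approximation_a_eq}, but the $H^1(\Gamma_S)$ stability and approximation of $Q_h$ on $H^1_0(\Gamma_S)$, while standard in the dual mortar literature, is not free and deserves a citation or a short Cl\'ement-type argument. Your $L^2(\Gamma_S)$ estimate, by contrast, is exactly the paper's: best approximation plus the inverse inequality applied to $\mu_h-\lambda_h$, absorbing $\|\lambda-\mu_h\|_{H^{-1/2}(\Gamma_S)}$ at rate $h^{3/2-\varepsilon}$ against the factor $h^{-1/2}$.

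One justification in your closing remark is wrong as stated: for functions vanishing on $\partial\Gamma_S$, the $H^{1/2}_{00}(\Gamma_S)$ and $H^{1/2}(\Gamma_S)$ norms are \emph{not} equivalent with a constant independent of the function and the mesh. The $H^{1/2}_{00}$ norm carries the additional weighted term $\|\rho^{-1/2}v\|_{L^2(\Gamma_S)}$ with $\rho=\operatorname{dist}(\cdot,\partial\Gamma_S)$, and the discrepancy between the two norms on discrete spaces vanishing at $\partial\Gamma_S$ is precisely the source of the familiar logarithmic factors in domain decomposition theory. This does not sink your proof, because the interpolation you actually propose can be carried out between the correct endpoint pair: $u|_{\Gamma_S}$ and $Q_h(u|_{\Gamma_S})$ both vanish on $\partial\Gamma_S$, so their difference lies in $H^1_0(\Gamma_S)$, and the identity $H^{1/2}_{00}(\Gamma_S)=\bigl(L^2(\Gamma_S),H^1_0(\Gamma_S)\bigr)_{1/2,2}$ turns the $h^{2-\varepsilon}$ and $h^{1-\varepsilon}$ endpoint estimates into the claimed $h^{3/2-\varepsilon}$ rate. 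State the interpolation that way and drop the norm-equivalence remark.
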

\mybeginproof
The first line of the saddle point problem~\eqref{spp_cont_line_1} and its Galerkin discretization~\eqref{spp_discrete_line_1} yield $a(u - u_h, v_h) + \langle v_h, \lambda - \lambda_h \rangle_{\Gamma_S} = 0$ for $v_h \in V_h$. 
Similar arguments as in the proof of Theorem~\ref{thm:estimate:steklov} give
\begin{align*} 
\| \lambda -  \lambda_h \|_{H^{-1/2}(\Gamma_S)} 
	&\leq c h^{3/2-\varepsilon} \| \lambda \|_{H^{1-\varepsilon}(\Gamma_S)} + c \sup_{z_h \in W_h} \frac{ a({u}_h - u, \mathcal{E}_h z_h) }{ \|z_h\|_{H^{1/2}_{00}(\Gamma_S) } }  \notag \\
	&\leq c h^{3/2-\varepsilon} \| \lambda \|_{H^{1-\varepsilon}(\Gamma_S)} + c\left|u_h - u_h^N\right|_{H^{1/2}(\Gamma)},
\end{align*}
where we exploit the fact that $a(u, \mathcal{E}_h z_h)=a(u_h^N, \mathcal{E}_h z_h)$ and a stability estimate for discrete harmonic functions, see~\cite[Lemma 4.10]{toselli:05}.
It is important to note, that $u_h^N \in V_h^{-1}$, which is defined as in the proof of Theorem~\ref{thm:estimate:steklov}, only depends on $u$, not on $u_h$ or $\widetilde u_h$. Nevertheless $u_h - u_h^N$ is discrete harmonic due to the Galerkin approximation of the saddle point problem. Using Corollary~\ref{cor:dual_neumann} and the primal estimate of Theorem~\ref{thm:main_result}, we conclude
\begin{align*}
\left|u_h - u_h^N\right|_{H^{1/2}(\Gamma)} 
	&\leq \left|u - u_h\right|_{H^{1/2}_{00}(\Gamma_S)} +  \left|u - u_h^N\right|_{H^{1/2}(\Gamma)} \notag \\
	&\leq c h^{3/2-\varepsilon}  \| u \|_{H^{5/2-\varepsilon}(\Omega)}.
\end{align*}

The remaining error estimate in the $L^2(\Gamma_S)$ norm  follows by an inverse inequality and the best approximation properties:
\begin{align*}
&\| \lambda - \lambda_h \|_{L^2(\Gamma_S)} \leq \inf_{\mu_h \in M_h} \left( \|  \lambda - \mu_h \|_{L^2(\Gamma_S) } + \| \mu_h - \lambda_h\|_{L^2(\Gamma_S)} \right)  \notag \\
&\quad \leq 
c \inf_{\mu_h \in M_h} \left( \|  \lambda - \mu_h \|_{L^2(\Gamma_S) } + \frac{1}{\sqrt{h}} \| \mu_h - \lambda\|_{H^{-1/2}(\Gamma_S)}\right) \\&\qquad + \frac{c}{\sqrt{h}} \| \lambda - \lambda_h\|_{H^{-1/2}(\Gamma_S)}. \myendproof
\end{align*}
 
\section{Numerical results}  
We chose an example with an analytically known solution on $\Omega = (0,1.4+e/2.7)\times(0,0.5)$, where $\Gamma_S = (0,1.4+e/2.7)\times \{0\}$. The  choice  of the domain was done in order to have an easy representation of the solution with an asymmetry over the Signorini boundary. We chose the volumetric and Dirichlet boundary data as well as the initial gap $g(x) = 0$ according to the exact solution which is constructed as follows.

In polar coordinates, the singular component (see also Remark~\ref{remark_regularity}) is given by $u_{\rm sing}(r,\theta) = r^{3/2}\sin(3/2 ~\theta)$ which we will also denote $u_{\rm sing}(x,y)$ in Cartesian coordinates. As this singular component has a one-sided active area, we need to modify the function to ensure the condition that the active set $\Gamma^{\mathrm{act} }$ is a compact subset of the Signorini boundary $\Gamma_S$.
The singular function is translated such that the transmission point between the active and inactive part is at $x_l \coloneq 0.2 + 0.3 / \pi \approx 0.295$. A spline of polynomial order four is used as a cut-off function $u_{\rm cut}$.  Adding a weighted reflection of this function, we get a function with a compact contact area. The second transmission point is set to  $x_r \coloneq 1.2 -  0.3 / \pi \approx 1.105$.
For some scalar weight $a > 0$ (in the examples $a = 0.7$), the solution is given by
\begin{align*}
u(x,y)  \coloneq  \big( u_{\rm sing}(x-x_l,y)~ u_{\rm cut}(x)  + a  ~ u_{\rm sing}(x_r - x,y)~u_{\rm cut}(1.4 - x)\big)(1 - y^2 ).
\end{align*}

For the right hand side $f \coloneq -\Delta u$, the Dirichlet data $u_D \coloneq \left. u \right|_{\Gamma_D}$ and $g(x)=0$, the solution satisfies the Signorini-type problem~\eqref{vi_cont}. The actual contact area is given by $\Gamma^{\mathrm{act}} = [0.2 + 0.3/\pi, 1.2 - 0.3/\pi]$. This choice of the contact area was made to ensure, that no vertex of the mesh coincides with its boundary. The domain yields an asymmetry of the contact area. The desired regularity $u \in H^{5/2-\varepsilon}(\Omega)$ is given by construction. We start from a coarse, quadrilateral, initial mesh of $4\times2$ elements and refine uniformly.

\begin{figure}[!htbp]
\centering
\includegraphics[width=0.49\textwidth]{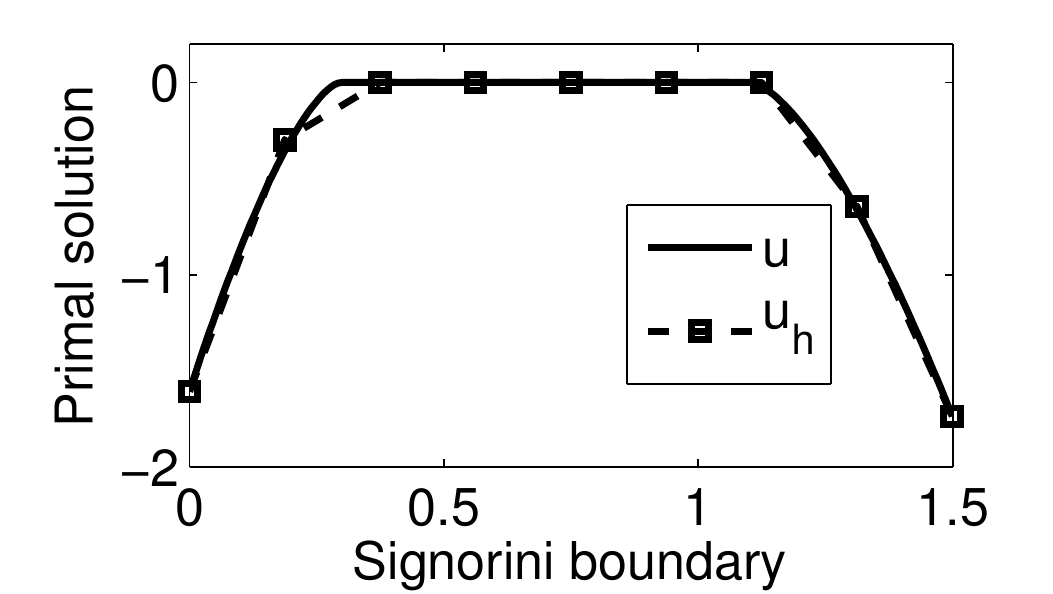}	%
\includegraphics[width=0.49\textwidth]{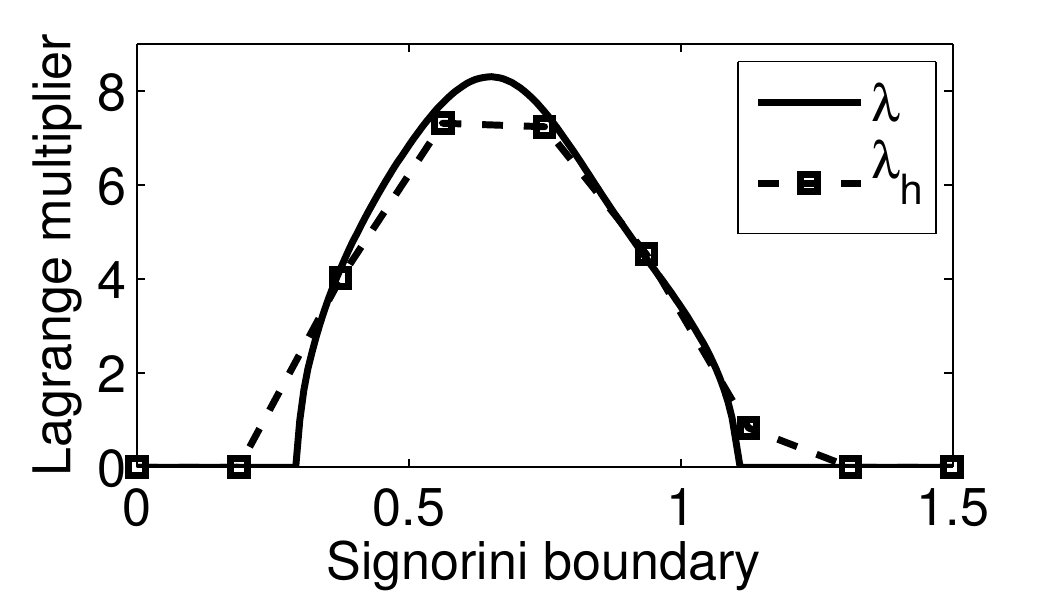}
\caption{   Exact solution and finite element approximation on level 2. Values for the primal solution (left) and the dual solution (right).  }
\label{fig:solution_level2}
\end{figure}

The exact solution on the Signorini boundary as well as a coarse finite element approximation are displayed in Figure~\ref{fig:solution_level2}.
In Figure~\ref{fig:errors_level6}, the error distribution restricted to $\Gamma_S$ on a fine finite element grid is shown. Since the discrete Lagrange multiplier is based on a biorthogonal basis and hence is discontinuous, a post-processing is applied for the visualization and the error computation. Instead of $\lambda_h = \sum_{i=1}^{N_{M_h}} \lambda_i \psi_i \in M_h$, we represent the Lagrange multiplier as
\[
\widehat \lambda_h = \sum_{i=1}^{N_{M_h}} \lambda_i \varphi_i \in W_h.
\]
As it was shown in~\cite[Section 3.3]{hueeber:08}, the order of convergence of $\widehat \lambda_h$ is the same as for $\lambda_h$.  Although the proof was shown for rates up to the order $h$, an analogue proof can be performed for the current situation.

\begin{figure}[!htbp]
\centering
\includegraphics[width=0.49\textwidth]{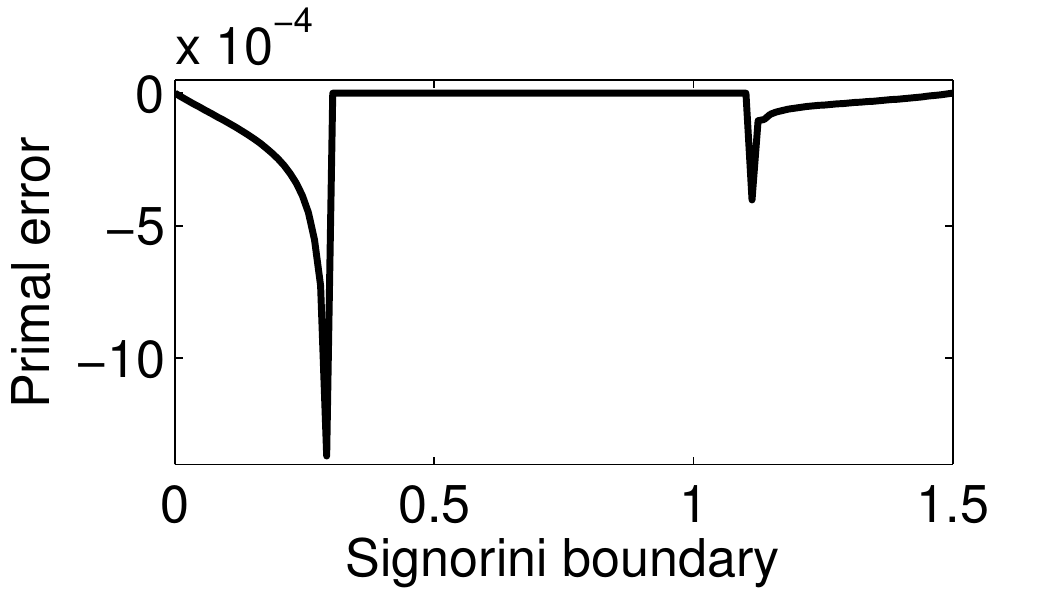}
\includegraphics[width=0.49\textwidth]{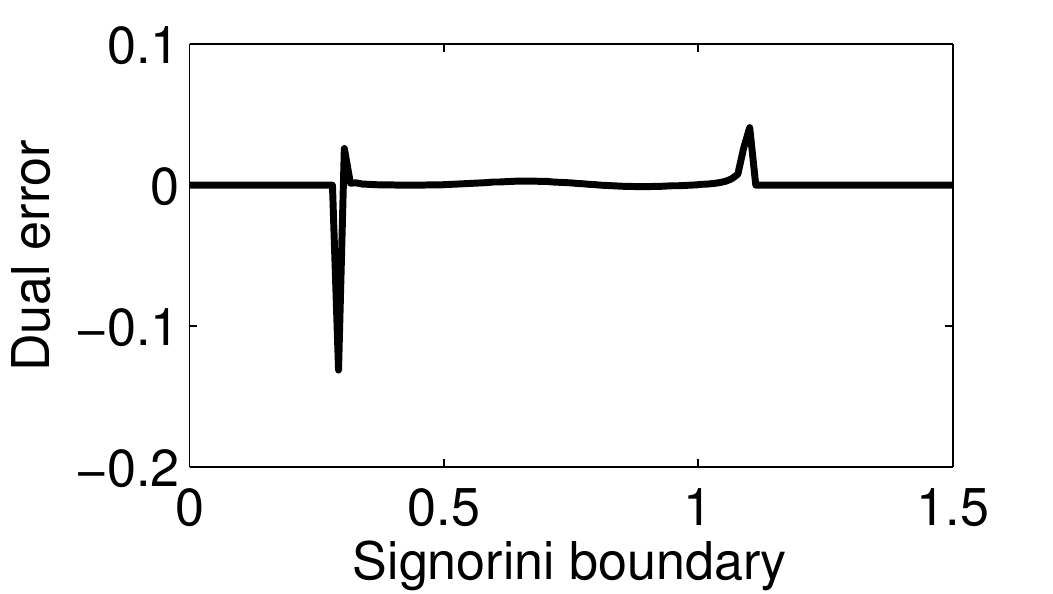}
\caption{  Discretization error displayed at the Signorini boundary. Error of the primal variable (left) and of the dual variable (right) at level $k = 6$.  }
\label{fig:errors_level6}
\end{figure}

The error distribution reflects the singularities of the solution at $\partial \Gammaact$. We observe two peaks of large errors at the boundary of the active set caused by the reduced regularity at these points. The error in the interior of the domain is of a similar order, hence the overall error is not dominated by the error on the boundary.

\begin{table}[!htbp]
\centering
\caption{ Relative errors of the primal and dual solution at different mesh levels $k$ and an averaged numerical convergence order.}
{\small
\begin{tabular}[h!]{ |c||c|c||c|c||c|c|}
\hline
$k$ & 
	\multicolumn{2}{|c||}{$\|\lambda - \lambda_h \|_{L^2(\Gamma_S)} $} & 
	\multicolumn{2}{|c||}{$\|u - u_h \|_{L^2(\Gamma_S)}$}
	& \multicolumn{2}{|c|}{$\|u - u_h \|_{L^2(\Omega)}$} 
	\\
\hline
\hline 
$1$  & $3.2629e\!-\!01$ & $-$  & $2.1724e\!-\!01$ &   & $1.0050e\!-\!01$ & \\
$2$ & $1.2955e\!-\!01$ &  $1.33$ & $4.2717e\!-\!02$  &  $2.35$ & $2.5761e\!-\!02$ &  $1.96$ \\
$3$ & $4.4331e\!-\!02$ &  $1.44$ & $7.0192e\!-\!03$  &  $2.48$ & $6.2041e\!-\!03$ &  $2.01$ \\
$4$ & $1.8560e\!-\!02$ &  $1.38$ & $2.3468e\!-\!03$  &  $2.18$ & $1.5550e\!-\!03$ &  $2.00$ \\
$5$ & $1.5159e\!-\!02$ &  $1.11$ & $9.3812e\!-\!04$  &  $1.96$ & $4.0559e\!-\!04$ &  $1.99$ \\
$6$ & $5.8243e\!-\!03$ &  $1.16$ & $1.8083e\!-\!04$  &  $2.05$ & $9.8738e\!-\!05$ &  $2.00$ \\
$7$ & $2.7746e\!-\!03$ &  $1.15$ & $4.0096e\!-\!05$  &  $2.07$ & $2.4336e\!-\!05$ &  $2.00$ \\
$8$  & $1.9410e\!-\!03$ &  $1.06$ & $1.7967e\!-\!05$  &  $1.94$ & $6.4165e\!-\!06$ &  $1.99$ \\
$9$ & $9.8497e\!-\!04$ &  $1.05$ & $4.6480e\!-\!06$  &  $1.94$ & $1.5986e\!-\!06$ &  $1.99$ \\
$10$  & $4.0873e\!-\!04$ &  $1.07$ & $9.7558e\!-\!07$  &  $1.97$ & $3.8972e\!-\!07$ &  $2.00$ \\
$11$ & $1.7042e\!-\!04$ &  $1.09$ & $1.9775e\!-\!07$  &  $2.01$ & $9.5737e\!-\!08$ &  $2.00$ \\
\hline
\end{tabular}}
\label{tab:errors_rate}
\end{table}

In Table~\ref{tab:errors_rate}, the computed $L^2$ norms of the error as well as the estimated rate of convergence are depicted for each level $k$. Errors in fractional Sobolev norms are given in Figure~\ref{fig:errors_fractional_norms}. 
The $L^2(\Gamma_S)$, $L^2(\Omega)$ and $H^1(\Omega)$ norms were computed by an adaptive integration to guarantee reliable results for the nonsmooth solution. 
The dual norm $H^{-1}(\Gamma)$, was estimated as the norm of the dual space to a fine finite element space. To be more precise, the Lagrange multiplier on each level $k=1,\ldots,11$ was prolongated up to level 15. On this level, we replace $\lambda$ by the piecewise linear interpolation and compute $\widehat{\lambda}_{h_k} - \mathcal{I}_{15}\lambda \in W_{h_{15}}$. Note that we have $\widehat{\lambda}_h \in W_h$ due to the post-processing as described above. The $H^{-1}(\Gamma_S)$ norm is approximated by the dual norm of $W_{h_{15}}$, i.e.
\[
\|  \lambda - \lambda_{h_k} \|_{H^{-1}(\Gamma_S)} \approx \sup_{w_{h_{15}} \in W_{h_{15}}} \frac{\int_{\Gamma_S} ( \widehat{\lambda}_{h_k} - \mathcal{I}_{15}\lambda) w_{h_{15}} \mathrm{d}x}{\|w_{h_{15}} \|_{H^1(\Gamma_S)}}.
\]
The fractional order Sobolev spaces $H^{1/2}_{00}(\Gamma_S)$ and $H^{-1/2}(\Gamma_S)$ were bounded using their interpolation property, i.e.,
\begin{align*}
\|v\|_{H^{1/2}_{00}(\Gamma_S)} \approx \|v\|_{H^1(\Gamma_S)}^{1/2} \|v\|_{L^2(\Gamma_S)}^{1/2},
\qquad 
\|v\|_{H^{-1/2}(\Gamma_S)} \approx \|v\|_{H^{-1}(\Gamma_S)}^{1/2} \|v\|_{L^2(\Gamma_S)}^{1/2}.
\end{align*}

\begin{figure}[!htbp]
\centering
\includegraphics[width=0.49\textwidth]{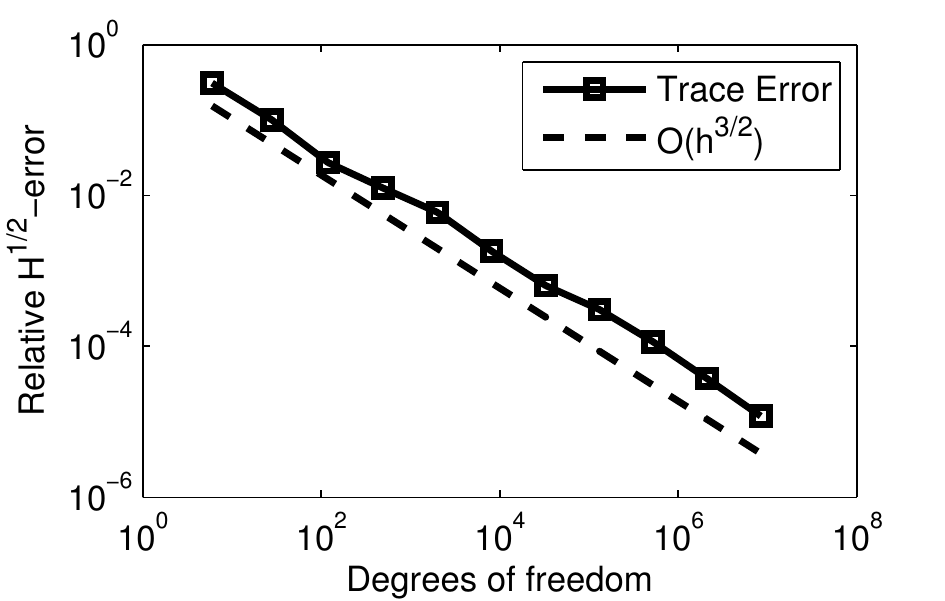}	%
\includegraphics[width=0.49\textwidth]{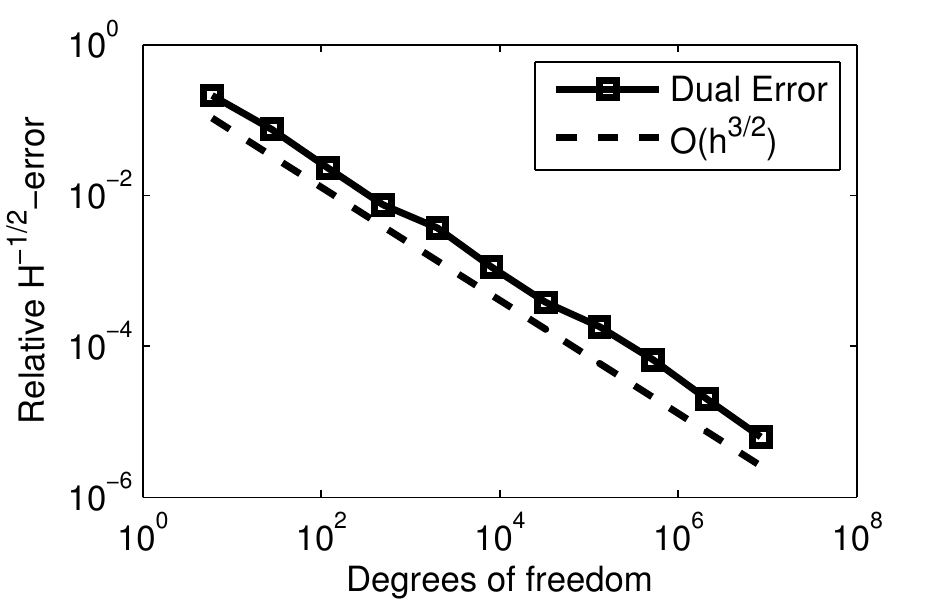}
\caption{   Estimated convergence rates in fractional Sobolev spaces. Left: $H^{1/2}(\Gamma_S)$ norm for the primal solution. Right: $H^{-1/2}(\Gamma_S)$ norm for the dual solution..  }
\label{fig:errors_fractional_norms}
\end{figure}

The averaged convergence rates $\alpha_k$ as given in Table~\ref{tab:errors_rate}, were computed in comparison to the first solution, by the formula
\begin{equation*}
\left( \frac{\text{err}_1}{\text{err}_k}\right) = \left(\frac{1}{2} \right)^{\alpha_k (k - 1)}.
\end{equation*}
We observe optimal order convergence rates in the $L^2$ norms, which is as expected from our theory for the Lagrange multiplier, whereas for the $L^2(\Omega)$ and the $L^2(\Gamma_S)$ norm we obtain better rates, than given by the theory. 
A closer look reveals, that the convergence rates from level to level for the values on $\Gamma_S$ vary more strongly. 
This is related to the fact, that the discrete resolution of the active set is restricted to the vertices of the finite element mesh. Depending on the quality of the approximation of the active set, the rates for values on $\Gamma_S$ can be larger or smaller than expected.
In Figure~\ref{fig:errors_fractional_norms}, we see that the averaging described above is a reasonable estimate for the convergence rate. 

\begin{table}[!htbp]
\centering
\caption{ Distance of the transmission points $x_{l}$ and $x_r$ to the discrete transmission points $x_{l, h}$ and $x_{r,h}$ on level $k$, compared to the mesh size $h$}
{\small
\begin{tabular}[h!]{ |l||c|c|c|c|}
\hline
$k$  & 
	$|x_l - x_{l, h} |$  &  $|x_l - x_{l, h} | / h$& 
	$|x_r - x_{r, h} |$  &  $|x_r - x_{r, h} | / h$
	\\
\hline
\hline 
$1$& $7.9339e\!-\!02$  & $0.21$ & $1.9989e\!-\!02$  & $0.05$  \\
$2$& $7.9339e\!-\!02$  & $0.42$ & $1.9989e\!-\!02$  & $0.11$  \\
$3$& $1.4369e\!-\!02$  & $0.15$ & $1.9989e\!-\!02$  & $0.21$  \\
$4$& $1.4369e\!-\!02$  & $0.31$ & $2.6865e\!-\!02$  & $0.57$  \\
$5$& $9.0579e\!-\!03$  & $0.39$ & $3.4384e\!-\!03$  & $0.15$  \\
$6$& $2.6556e\!-\!03$  & $0.23$ & $3.4384e\!-\!03$  & $0.29$  \\
$7$& $3.2012e\!-\!03$  & $0.55$ & $3.4384e\!-\!03$  & $0.59$  \\
$8$& $2.7280e\!-\!04$  & $0.09$ & $5.1006e\!-\!04$  & $0.17$  \\
$9$& $2.7280e\!-\!04$  & $0.19$ & $5.1006e\!-\!04$  & $0.35$  \\
$10$& $2.7280e\!-\!04$  & $0.37$ & $2.2203e\!-\!04$  & $0.30$  \\
$11$& $9.3246e\!-\!05$  & $0.25$ & $1.4402e\!-\!04$  & $0.39$  \\
\hline
\end{tabular}}
\label{tab:active_distance}
\end{table}

We are also interested in a good resolution of the actual contact set, so we take a closer look at the solution near the boundary of $\Gamma^{\mathrm{act}}$. The discrete active set is taken as the coincidence set of the primal solution $\Gamma^{\mathrm{act}}_h \coloneq \{x \in \Gamma_S: u_h(x) = 0 \}$.  In Table~\ref{tab:active_distance}, the distance between the transmission points and the discrete transmission points is shown and compared to the mesh size. We note, that the distance is always smaller than the mesh size.  Since no vertex matches with a transmission point, this is the best we can expect.
Figure~\ref{fig:transmission_pt} shows the dual solution and some finite element approximations on the Signorini boundary.
\begin{figure}[!htbp]
\centering
\includegraphics[width=0.7\textwidth]{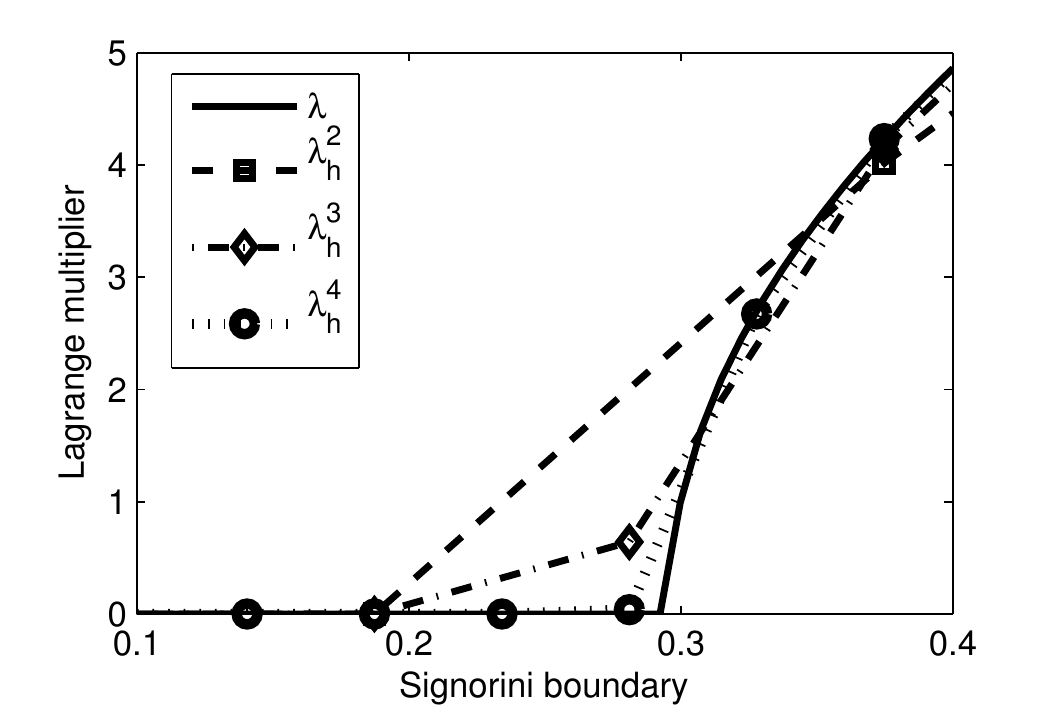}
\caption{  Zoom of dual solution and approximations at levels 2 to 4 around the left transmission point. }
\label{fig:transmission_pt}
\end{figure}

\section{Conclusion}
In this work, we proved optimal order convergence in the $H^{1/2}(\Gamma_S)$ norm  for a standard finite element approximation of  Signorini problems. Based on this estimate, an optimal order error bound for the Lagrange multiplier, i.e., the flux, was derived in the $H^{-1/2}(\Gamma_S)$ norm and an improved bound for the primal error in the  $L^2(\Omega)$ norm was shown as a corollary.

Our analysis is based on a variational formulation of the continuous and the discrete Schur complement system which are variational inequalities posed over $\Gamma_S$. 
The difficulties arising from the nonlinearity could be handled by a Strang lemma, resulting in two terms. One term was a Galerkin discretization error on $\Gamma_S$, which could be bounded by standard techniques. To bound the second term, a trace error  of a linear problem posed on the whole domain, modern duality techniques with local estimates were adapted to the given situation.

A numerical example confirmed the optimal bounds and showed a good resolution of the active set. It also revealed that a gap remains between the theoretical and numerical results in the $L^2(\Omega)$ norm. As it was noted in Section~\ref{sec:L2_improved}, improved bounds in the  $L^2(\Gamma_S)$ norm would directly imply improved bounds in the $L^2(\Omega)$ norm.

\section*{Funding}
Support from the International Research Training Group IGDK 1754, funded by the German Research Foundation (DFG) and the Austrian Science Fund (FWF), is gratefully acknowledged.
The second and third authors have been supported by the German Research Foundation (DFG) in Project WO 671/15-1 and the Priority Programme “Reliable Simulation Techniques in Solid Mechanics. Development of Non-standard Discretisation Methods, Mechanical and Mathematical Analysis” (SPP 1748).

\bibliographystyle{alpha}
\bibliography{./bib/bibliography}

\end{document}